\newtheorem{lemma}{Lemma}[section]
\newtheorem{thm}[lemma]{Theorem}
\newtheorem{prop}[lemma]{Proposition}
\newtheorem*{prop*}{Proposition}
\newtheorem{theorem}{Theorem}
\newenvironment{reptheorem}[1]
{\rthm}
  {\endrthm}
\theoremstyle{definition}
\newtheorem{defn}[lemma]{Definition}
\newtheorem{rem}[lemma]{Remark}
\theoremstyle{definition}
\newtheorem*{claim}{Claim}
\definecolor{darkgreen}{cmyk}{1,0,1,.2}
\DeclareMathOperator{\sym}{Sym}
\DeclareMathOperator{\aut}{Aut}
\DeclareMathOperator{\alt}{alt}
\DeclareMathOperator{\id}{id}
\newcommand{\N}{\ensuremath {\mathbb{N}}}
\newcommand{\R} {\ensuremath {\mathbb{R}}}
\newcommand{\Z} {\ensuremath {\mathbb{Z}}}
\renewcommand{\phi}{\varphi}
\begin{document}

\title[Bounded cohomology of groups acting on trees]{Bounded cohomology of groups acting on trees with almost prescribed local actions}

\author{Giuseppe Bargagnati}
\address{Dipartimento di Matematica, Universit\`a di Bologna, Italy}
\email{giuseppe.bargagnati2@unibo.it}
\keywords{bounded cohomology, actions on trees, quasimorphisms}
\subjclass[2020]{ 20J05, 55N10, 20E08, 20F65}
\author{Elena Bogliolo}
\address{Dipartimento di Matematica, Universit\`a di Pisa, Italy}
\email{elena.bogliolo@phd.unipi.it}

\begin{abstract}
    We prove the vanishing of bounded cohomology of the groups acting on trees with almost prescribed local actions $G(F, F')$, where $F<F'$ are finite permutation groups such that $F'$ is 2-transitive. By contrast, when $F'$ is not 2-transitive, we prove that the second bounded cohomology with real coefficients of the groups $G(F, F')$ is infinite dimensional. 
\end{abstract}

\maketitle

\section{Introduction}

Despite being widely applied, bounded cohomology is in general very hard to compute. One of the very first results in the field was the vanishing of bounded cohomology of amenable groups in every (positive) degree \cite{Joh72}.
The groups for which the bounded cohomology vanishes in all positive degrees are said to be \emph{boundedly acyclic}. The first non-amenable example of a boundedly acyclic group was found by Matsumoto and Morita, who proved that the group of compactly supported homemomorphisms of the Euclidean $n$-space is boundedly acyclic \cite{MM85}. 
In recent years, various classes of groups have been proved to be boundedly acyclic: among them there are lamplighter groups \cite{Monod2022}, binate groups \cite{FLM22}, groups which admit commuting cyclic conjugates \cite{CFFLM24, CFFLM24bis} and the group of \emph{all} homeomorphisms of $\R^n$ \cite{FFMNK24}. 
A standard way to prove that a non-amenable group is boundedly acyclic is to make use of the so-called displacement techniques. These techniques require copies of subgroups that commute with each other.

While many results have been obtained about the vanishing of bounded cohomology, the non-vanishing in infinitely many degrees is by now more elusive.
However, in degree two, the situation is different. In fact, it is sufficient to find a homogeneous quasimorphism which is not a homomorphism to prove that the second bounded cohomology with real coefficients of the group does not vanish. Many results have been obtained on the non-vanishing of second bounded cohomology studying quasimorphisms \cite{EF97, Fu98, Fu00, BF02}. 

In this article, we study the bounded cohomology of groups $G(F, F')$ acting on trees with almost prescribed local actions, where $F<F'$ are finite permutation groups (see Section \ref{sec:groups} for the definition). Such groups were defined by Adrien Le Boudec \cite{LB15}, and provided the first known examples of non-$C^*$-simple groups with trivial amenable radical \cite{lebouinventiones}, answering a long-standing open question \cite{BD00}. 

The groups acting on trees with almost prescribed local actions, endowed with the topology described in Section \ref{sec:groups}, are locally compact groups, so we study their continuous bounded cohomology (see Section \ref{sec:bc} for the definition). Our main result is the following.

\begin{theorem}\label{principale}
    Let $d\geq 3$ be a natural number, let $\Omega=\{1, \ldots, d\}$ and let $F<F'<\sym(\Omega)$ be permutation groups such that $F'$ preserves the orbits of $F$. The following hold:
    \begin{itemize}
        \item[(1)]\label{item1} if $F'$ acts 2-transitively on $\Omega$, then the group $G(F, F')$ is boundedly acyclic;
        \item[(2)]\label{item2} otherwise, $H^2_{\textup{cb}}(G(F, F'))$ is infinite dimensional.
    \end{itemize}
\end{theorem}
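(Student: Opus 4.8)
The plan is to handle the two cases by completely different mechanisms. For (1) I would realise $\partial T_d$ as an amenable coefficient space for $G(F,F')$ and reduce bounded acyclicity to a combinatorial problem about finite trees; for (2) I would build quasimorphisms directly from the action on $T_d$. For (1), note first that $G(F,F')$ acts on $T_d$ with amenable vertex and edge stabilisers: the stabiliser of a vertex $v$ is a directed union of the compact subgroups formed by the automorphisms fixing $v$ whose singularities lie in a fixed ball about $v$ (and which have $F$-local action outside that ball), hence is locally elliptic and amenable, and an edge stabiliser is an index-$\le 2$ subgroup of such a group. Since a locally compact group acting on a tree with amenable stabilisers acts amenably (in the sense of Zimmer) on the boundary, $L^\infty(\partial T_d)$ is a relatively injective $G(F,F')$-module and
\[
H^\bullet_{\mathrm{cb}}(G(F,F');\R)\;\cong\;H^\bullet\Bigl(L^\infty\bigl((\partial T_d)^{\bullet+1}\bigr)^{G(F,F')},\ \delta\Bigr).
\]

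The core of (1) is to show this complex is acyclic in positive degrees. I would attach to a generic $(n{+}1)$-tuple of ends its \emph{coloured core}: the finite subtree spanned by its branch points, together with the labelling of its ends, its internal edge-lengths, and its colouring read modulo compatible $F'$-moves. Here $2$-transitivity of $F'$ is used crucially to trivialise the rays going off to infinity — along such a ray the finitely many admissible $F'$-moves act transitively on the admissible continuations, so the induced equivalence relation on the ``tails'' is that of an irreducible aperiodic Markov chain, hence ergodic, and only the coloured core survives. This identifies $L^\infty\bigl((\partial T_d)^{n+1}\bigr)^{G(F,F')}$ with $\ell^\infty$ of the countable set of coloured cores, a purely combinatorial cochain complex. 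Its acyclicity I would prove by induction on the complexity (number and lengths of internal edges) of the cores, extracting from the cocycle identity on configurations with one more end enough relations to reduce every class to one supported on tripods, with $2$-transitivity of $F'$ invoked to normalise the colours at the branch vertex that gets isolated; the base case says that an $F'$-invariant bounded function on ordered triples of distinct colours satisfying the four-term relation of a star of four ends is constant — it is a simplicial coboundary $\delta\beta$ with $\beta$ an $F'$-invariant function on ordered pairs, and $2$-transitivity forces $\beta$, hence $\delta\beta$, to be constant. (When $d=3$ one has $F'=\sym(\Omega)$, which is $3$-transitive, and the claim is immediate.)

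For (2), the failure of $2$-transitivity yields a non-trivial $G(F,F')$-invariant decoration of $T_d$: if $F'$ is intransitive its orbit partition of $\Omega$ colours the oriented edges by at least two colours, while if $F'$ is transitive but $F'_a$ is intransitive on $\Omega\setminus\{a\}$ the $F'$-orbits on ordered pairs of distinct colours assign a ``type'' with at least two values to each ordered pair of edges at a common vertex. In either case an oriented bi-infinite geodesic of $T_d$ carries a bi-infinite sequence of types, read off $G(F,F')$-equivariantly, and I would apply the Brooks / Bestvina--Fujiwara machinery: for an infinite family of finite type-patterns $p$ chosen so that occurrences of distinct patterns cannot interleave, the signed counting functions $\varphi_p(g)=\#\{\text{copies of }p\text{ in }[x,gx]\}-\#\{\text{copies of }\bar p\text{ in }[x,gx]\}$ are continuous quasimorphisms whose homogenisations are linearly independent modulo homomorphisms; non-triviality is checked by evaluating on suitable hyperbolic elements of $G(F,F')$ — for instance among the automorphisms with everywhere-trivial local action, which form a copy of $\mathbb{Z}/2 * \cdots * \mathbb{Z}/2$ — whose axes realise the patterns asymptotically and asymmetrically. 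Since $\widetilde{Q}(G(F,F'))$ embeds in $H^2_{\mathrm{cb}}(G(F,F');\R)$, this gives $\dim H^2_{\mathrm{cb}}(G(F,F');\R)=\infty$.

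The step I expect to be the main obstacle is the positive-degree acyclicity of the combinatorial complex in (1): even after the rays have been discarded the complex is infinite-dimensional because of the internal edge-lengths, the cocycle identities degenerate to tautologies on the simplest (caterpillar) configurations, and trivialising every class requires a delicate induction in which the boundedness of the cochains is exactly what forces the edge-lengths not to contribute. This is also precisely where $2$-transitivity of $F'$ enters, consistently with its failure being what makes $H^2_{\mathrm{cb}}$ infinite-dimensional in (2).
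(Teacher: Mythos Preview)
Your approach to (2) is correct and close in spirit to the paper's, which invokes the Iozzi--Pagliantini--Sisto trichotomy rather than building the quasimorphisms by hand: one checks that $G(F,F')$ fixes no boundary point (already the subgroup $U(\id)$ does not) and, when $F'$ is not $2$-transitive, fails to act transitively on length-$1$ or length-$2$ paths from a vertex, whence the third alternative --- infinitely many independent median quasimorphism classes --- must hold. Your pattern-counting construction is essentially the content of that third alternative, so this part is fine if less efficient.

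For (1), however, your route diverges from the paper's and leaves a genuine gap. The paper does \emph{not} work on the boundary $\partial T$. It uses the aligned complex $\ell^\infty_{\mathcal{A}}(T^{\bullet+1})$ of Bucher--Monod (bounded alternating functions on vertex-tuples lying on a common geodesic), which computes $H^\bullet_{\textup{cb}}$ once the action on $T$ is amenable. The main step, isolated as Theorem~\ref{secondario}, is that if every finite segment can be moved by $G$ into a fixed line $L$ and the setwise stabiliser $H$ of $L$ is transitive on the geometric edges of $L$, then restriction induces an isomorphism $\ell^\infty_{\mathcal{A}}(T^{n+1})^G\cong\ell^\infty_{\alt}(L^{n+1})^H$; since $H$ is an extension of an intersection of point-stabilisers by a subgroup of $D_\infty$, it is amenable and the right-hand complex is acyclic. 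The $2$-transitivity of $F'$ is used only twice, and in an elementary way: to obtain transitivity of $G(F,F')$ on finite segments, and to build a concrete line $L$ (via an explicit colouring pattern drawn from a nontrivial cycle of $F$) whose stabiliser contains both a length-$2$ translation and a reflection. No combinatorial induction on tree shapes, no edge-length analysis, no ergodicity argument.

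In your outline the acyclicity of the coloured-core complex is, as you yourself flag, the main obstacle, and nothing you write resolves it: the induction scheme is not specified, the base case handles only the star-of-four-ends configuration, and the mechanism by which boundedness is supposed to kill the internal edge-length dependence is asserted rather than explained. Even the preliminary identification of $G$-invariant $L^\infty$-functions with functions of coloured cores is more delicate than you indicate, since elements of $G(F,F')$ carry only finitely many $F'$-singularities, so along an infinite ray almost every local move is constrained to lie in $F$; your Markov-chain ergodicity heuristic would have to be made precise \emph{conditionally on the finite core} to yield the identification you want. The aligned-complex approach sidesteps all of this by collapsing the problem to a single line acted on by an amenable group.
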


The displacement techniques usually adopted to prove that a non-amenable group is boundedly acyclic are generally applied to groups which contain direct product subgroups of infinite cardinality. Therefore, the fact that groups \(G(F,F')\) do not contain finitely generated direct products of infinite cardinality (Lemma \ref{directprod}) witnesses the difficulty in adapting these techniques in the context of groups acting on trees with almost prescribed local actions. 
For this reason, to prove item (1) of Theorem \ref{principale}, we adopt a different strategy, which exploits the aligned complex introduced by Bucher and Monod \cite{BM17}. This technique has found other remarkable applications to bounded cohomology in recent years \cite{BM17bis, AB22, Mar22}, and has been lately generalized by Monod to groups acting properly and strongly transitive on buildings \cite{Mon24}. However, in the situation studied by Bucher and Monod, the groups act transitively on the boundary of a locally finite tree, whereas the group $G(F, F')$ may not act transitively on the boundary of said regular tree (for example, if $F$ acts freely on $\Omega$ then \(G(F,F')\) is countable \cite[Definition 3.1]{LB15}). In order to circumvent this issue, we prove a result of independent interest about the vanishing of the bounded cohomology of groups acting amenably on locally finite trees.

\begin{theorem}\label{secondario}
    Let $G$ be a locally compact group acting amenably and without inversions on a locally finite (regular) tree $\mathcal{T}_d$.\ Let us suppose that 
    there exists a line $L$ in $T$ with the following properties:
    \begin{itemize}
        \item for every segment of any (fixed) finite length $[x,y]$ there exists $g \in G$ such that $[gx, gy]\subset L$;
        \item the set-wise stabilizer of $L$ acts transitively on the geometric edges of $L$.
    \end{itemize}
    Then $H^n_{\textup{cb}}(G;\R)=0$ for each $n\geq1$.
\end{theorem}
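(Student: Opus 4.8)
The plan is to adapt the \emph{aligned complex} technique of Bucher and Monod \cite{BM17} to this less homogeneous setting, with the two hypotheses on the line $L$ doing the work that transitivity on the boundary does in \cite{BM17}. Write $T=\mathcal{T}_d$ and let $\partial T$ be its space of ends, which is compact since $T$ is locally finite. First I would use that $G$ acts amenably on $T$ to deduce that the $G$-action on $\partial T$ is amenable (in the sense of Zimmer), so that $L^\infty(\partial T)$ is a relatively injective coefficient $G$-module; since $\partial T$ is compact it carries a $G$-quasi-invariant probability measure, whose associated ``integrate out one variable'' map is a contracting homotopy for the augmented homogeneous complex, and therefore $L^\infty\big((\partial T)^{\bullet+1}\big)$ is a strong resolution of $\R$ by relatively injective $G$-modules. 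Consequently $H^\bullet_{\textup{cb}}(G;\R)$ is computed by $L^\infty\big((\partial T)^{\bullet+1}\big)^G$.

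Next, for $n\geq 0$ let $\mathrm{Al}_n$ be the $G$-set of \emph{aligned $(n+1)$-tuples} of vertices of $T$: tuples $(x_0,\dots,x_n)$ lying, in this order and with repetitions allowed, on a common geodesic. The simplicial face maps preserve alignment, so $\ell^\infty(\mathrm{Al}_\bullet)$ is a complex of $G$-modules whose terms are relatively injective (again because $G$ acts amenably, so that $\ell^\infty$ of the $G$-invariant subset $\mathrm{Al}_n\subseteq V(T)^{n+1}$ is relatively injective). Following \cite{BM17}, I would then construct a $G$-equivariant comparison between the augmented complex $\R\to\ell^\infty(\mathrm{Al}_\bullet)$ and $L^\infty\big((\partial T)^{\bullet+1}\big)$ — by means of a double complex coupling the boundary direction to the tree direction, the essential move being that one ``pushes a new vertex off to infinity'' in a chosen direction while the amenable $G$-space $\partial T$ supplies the measure over those directions. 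This identifies $H^\bullet_{\textup{cb}}(G;\R)$ with $H^\bullet\big(\ell^\infty(\mathrm{Al}_\bullet)^G\big)$. The delicate point is that the part of the Bucher--Monod argument that in their setting uses transitivity of $G$ on $\partial T$ must here be replaced by amenability of the action together with the combinatorial transitivity along $L$ established below.

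It then remains to compute $\ell^\infty(\mathrm{Al}_\bullet)^G$ and show it is acyclic in positive degrees. Assign to an aligned tuple $(x_0,\dots,x_n)$ its \emph{shape} $\big(d(x_0,x_1),\dots,d(x_{n-1},x_n)\big)\in\Z_{\geq 0}^{n}$. The first bullet of the hypothesis moves every aligned tuple, by some element of $G$, onto $L$; the second bullet forces the setwise stabilizer of $L$ to surject onto an edge-transitive — hence vertex-transitive and translation-rich — subgroup of $\aut(L)\cong D_\infty$, which thus permutes transitively the sub-tuples of $L$ of each fixed shape. Hence $G$ acts transitively on aligned tuples of each shape (up to reversal), so $\ell^\infty(\mathrm{Al}_n)^G\cong\ell^\infty\big(\Z_{\geq 0}^{n}\big)$ — or its reversal-invariant part — with the simplicial coboundary becoming the ``merge/delete'' operator
\[
(\delta f)(\ell_1,\dots,\ell_{n+1})=f(\ell_2,\dots,\ell_{n+1})+\sum_{i=1}^{n}(-1)^{i}f(\ell_1,\dots,\ell_i+\ell_{i+1},\dots,\ell_{n+1})+(-1)^{n+1}f(\ell_1,\dots,\ell_n).
\]
Fixing a translation-invariant mean $m$ on $\Z_{\geq 0}$, the maps $h(f)(\ell_1,\dots,\ell_{n-1})=m_t\big[f(t,\ell_1,\dots,\ell_{n-1})\big]$ — geometrically, averaging over the position at which an extra initial vertex is inserted — satisfy $\delta h+h\delta=\id$ in positive degrees; if $G$ also realizes the reversal, one first symmetrizes $h$ over the corresponding $\Z/2\Z$-action, which remains a contracting homotopy after averaging. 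Therefore $H^n\big(\ell^\infty(\mathrm{Al}_\bullet)^G\big)=0$ for all $n\geq 1$, and combined with the second step this gives $H^n_{\textup{cb}}(G;\R)=0$ for all $n\geq 1$.

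The main obstacle is the second step: carrying out Bucher and Monod's identification of bounded cohomology with the aligned complex in the absence of transitivity on $\partial T$, which requires pinpointing exactly where that transitivity enters their argument and replacing it by the amenability hypothesis together with the transitivity along $L$. A secondary but genuine burden is the measurability and continuity bookkeeping intrinsic to the \emph{continuous} bounded cohomology of a non-discrete locally compact group, and keeping a coherent orientation along $L$ throughout the last step, which is where the ``without inversions'' assumption gets used.
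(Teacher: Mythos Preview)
Your overall architecture---compute $H^\bullet_{\textup{cb}}(G;\R)$ via the aligned complex, then exploit the line $L$---matches the paper's, but you insert an unnecessary detour through $\partial T$ that manufactures exactly the obstacle you flag as the main difficulty.

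The paper never touches $\partial T$. It endows the \emph{discrete} vertex set $T$ with a probability measure of full support and observes that $T$, hence each $T^{n+1}$, hence the $G$-invariant subspace of aligned tuples, is an amenable regular $G$-space. Bucher--Monod already prove, independently of any transitivity hypothesis on the group, that the aligned cochain complex is a resolution of $\R$ \cite[Corollary~8]{BM17}; combined with amenability of the action on aligned tuples, Monod's machinery \cite[Theorem~7.5.3]{Mon01} gives directly that $\ell^\infty_{\mathcal A}(T^{\bullet+1})^G$ computes $H^\bullet_{\textup{cb}}(G;\R)$. No comparison with $L^\infty((\partial T)^{\bullet+1})$, no double complex. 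You yourself already record the relative injectivity of $\ell^\infty(\mathrm{Al}_n)$; the only missing ingredient is the resolution statement from \cite{BM17}, and once you cite it, your ``delicate point'' about lacking transitivity on $\partial T$ simply evaporates.

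For the final computation the paper also takes a shorter route than your explicit contracting homotopy. Rather than parametrizing $G$-invariant aligned cochains by shapes in $\Z_{\geq 0}^n$ and contracting via a translation-invariant mean, it shows that restriction to $L$ is an isomorphism
\[
\ell^\infty_{\mathcal A}(T^{n+1})^G\ \xrightarrow{\ \cong\ }\ \ell^\infty_{\alt}(L^{n+1})^H,
\]
where $H$ is the setwise stabilizer of $L$: hypothesis~(i) gives injectivity, and hypothesis~(ii) together with ``without inversions'' makes the inverse well-defined. The right-hand complex computes $H^\bullet_{\textup{cb}}(H;\R)$, and $H$ is amenable---it is an extension of a subgroup of $D_\infty$ by an intersection of vertex stabilizers, which are amenable since the $G$-action is---so it vanishes in positive degree. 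Your mean-based homotopy is essentially a bare-hands reproof of this amenability and should go through, but recognizing the target as the bounded cohomology of an amenable group is both cleaner and sidesteps the orientation and index bookkeeping you anticipate.
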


Also the previous theorem admits an interpretation in terms of buildings, which was pointed out to us by Nicolas Monod, for which we refer to Remark \ref{remarkbuildings}.

On the other hand, to prove item (2) of Theorem \ref{principale}, we use a result by Iozzi, Pagliantini and Sisto \cite[Theorem 1]{IPS20} which gives sufficient conditions for the existence of an infinite family of linearly independent coclasses in the second bounded cohomology with real coefficients of groups acting minimally on locally finite trees.

\subsection*{Plan of the paper} In Section \ref{sec:groups} we define the groups $G(F, F')$ and state some of their main properties. In Section \ref{sec:bc} we recall the definition of bounded cohomology of groups and quasimorphisms. In Section \ref{sec:aligned} we define the aligned complex introduced by Bucher and Monod \cite{BM17}. Finally, Section \ref{sec:proofs} is devoted to the proof of Theorem \ref{principale}.

\subsection*{Acknowledgments} We thank Francesco Fournier-Facio for suggesting us this problem and for many helpful comments and suggestions. We thank Adrien Le Boudec and Nicolas Monod for helpful comments. The second author thanks her advisor Marco Moraschini for his guidance throughout this project and for useful discussions. This paper was funded by the European Union - NextGenerationEU under the National Recovery and Resilience Plan (PNRR) - Mission 4 Education and research - Component 2 From research to business - Investment 1.1 Notice Prin 2022 - DD N. 104 del 2/2/2022, from title “Geometry and topolgy of manifolds”, proposal code 2022NMPLT8 - CUP J53D23003820001.

\section{Groups acting on trees with almost prescribed local action}\label{sec:groups}
In this section we recall the definition of the groups of automorphisms of a locally finite regular tree with \emph{almost prescribed local action} introduced by Le Boudec \cite{LB15}.\\
Here and henceforth, we consider simplicial group actions on a tree, and we may identify the action of a group on a tree with the action on the vertices of the tree. From now on, let $d\geq 3$ be a natural number and let $\Omega=\{1, \ldots, d\}$. Let us denote with $T=\mathcal{T}_d$ a regular tree of degree $d$ and with \(V(T)\) and \(E(T)\) the set of vertices 
and the set of non-oriented edges of $T$, respectively. Let us denote with $\aut(T)$ the group of automorphisms of $T$ endowed with the pointwise-convergence topology. Let $F<F'<\sym(\Omega)$ be permutation groups such that \(F'\) preserves the orbits of $F$. 
Let us fix a coloring 
\[c \colon E(T) \to \Omega\] 
such that for every vertex
\(v \in V(T)\) the map \(c\) restricts to a bijection \(c_v\) from the set \(E(v)\) of edges
containing \(v\) to \(\Omega\). In this setting \(c(e)\) will denote the color of the edge \(e\).
Given an automorphism \(g \in \aut(T)\) and a vertex \(v \in V(T)\), we denote with
\(g_v \colon E(v) \to  E(g\cdot v)\), the induced bijection on the edges. The combination of \(g_v\) with the coloring gives rise to a permutation \(\sigma(g, v) \in \sym(\Omega)\). We call \(\sigma(g, v)\) the \emph{local
permutation of g at the vertex \(v\)} and we define it
by \[\sigma(g, v) \coloneqq c_{gv} \circ g_v \circ c_v^{-1}.\] 
We recall the definitions of the groups \(U(F)\) introduced by Burger and Mozes \cite{BM00} and of the groups $G(F, F')$ introduced by Le Boudec \cite{LB15}.

\begin{defn}[Universal Burger-Mozes groups]
    Let \(F \leq \sym(\Omega)\) be a permutation group. The \emph{universal Burger-Mozes group} \(U(F )\) is the subgroup of \(\aut{(T)}\) whose
local action is prescribed by \(F\), that is,
 \[U(F)\coloneqq\{g \in \aut(T) \colon \sigma(g, v) \in F \mbox{ for all } v \in V(T)\}.\]
\end{defn}

Note that, the universal Burger-Mozes groups are closed subgroups of \(\aut(T)\) with respect to the pointwise-convergence topology, and for every \(F \leq F'\), the group \(U(F)\) is a subgroup of \(U(F')\). Moreover, the group \(U(F)\) is discrete if and only if the permutation group \(F\) acts freely on \(\{1,\dots,n\}\) \cite[Section 3.1]{BM00}. 

\begin{defn}[Groups acting on trees with almost prescribed local actions]
    Let \(F\leq F'\) be subgroups of the symmetric group \(\sym(\Omega)\) such that \(F'\) preserves the orbits of \(F\).
    We set \[G(F)\coloneqq \{g \in \aut(T) \colon \sigma(g, v) \in F \mbox{ for all but finitely many } v \in V(T) \}.\]
    The group \(G(F,F')\) is defined as
\[G(F,F')\coloneqq G(F)\cap U(F').\] 
\end{defn}

The group $G(F, F')$ can be endowed with a natural group topology, which is defined by requiring that the inclusion $U(F) \hookrightarrow G(F, F')$ is continuous and open, where $U(F)$ is endowed with the induced topology from $\aut(T)$. With this topology, $G(F, F')$ is a compactly generated, totally disconnected, locally compact group \cite[Section 1.1]{LB15}. A totally disconnected, locally compact group is called \emph{locally elliptic} if it is a directed union of compact open subgroups.

\begin{rem}\label{elliptic}
    Since the point stabilizers in $G(F)$ are locally elliptic groups, so are the point stabilizers in $G(F, F')$. In fact, for every vertex \(v\in T\), the point stabilizer \(G(F,F')_v\) is the intersection of the closed subgroup \(U(F')_v\) and the locally elliptic subgroup \(G(F)_v\) \cite[Section 3.1]{LB15}. We also notice that locally elliptic groups are amenable, since they are a directed union of compact (hence, amenable) subgroups.
\end{rem}

\begin{rem}\label{notamen}
    We note that $U(\id)$ is a subgroup of $G(F, F')$ for any choice of permutation groups $F$ and $F'$ which satisfy the requirements of the definition. The group $U(\id)$ is isomorphic to the free product of $d\geq 3$ copies of $\mathbb{Z}/2$, where $d$ is the degree of $T$. Hence, the groups $G(F, F')$ are not amenable.
\end{rem}

The following lemma was pointed out to us by Francesco Fournier-Facio.

\begin{lemma}\label{directprod}
    Let us assume that \(F\) acts freely. Then every finitely generated direct product subgroup of $G(F, F')$ is finite.
\end{lemma}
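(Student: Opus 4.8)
The plan is to analyse the action of $G(F,F')$ on the tree $T$, to reduce the statement to a single rigid configuration sitting on a hyperbolic axis, and then to use that $F$ acts freely in order to rule that configuration out.

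First I would record the consequences of freeness. Since $F$ acts freely, $U(F)$ is discrete, and as $U(F)$ is open in $G(F,F')$ the whole group $G(F,F')$ is discrete; hence, by Remark~\ref{elliptic}, every vertex stabiliser $G(F,F')_v$ is a directed union of finite subgroups, and the same holds for the setwise stabiliser of any edge (which contains the pointwise stabiliser of that edge with index at most $2$). In particular, \emph{every finitely generated subgroup of $G(F,F')$ with bounded orbits on $T$ is finite}, since such a subgroup fixes a vertex or stabilises an edge. Now let $H=A\times B\le G(F,F')$ be finitely generated with $A,B\ne 1$; then $A$ and $B$ are finitely generated, and if $H$ is infinite then one factor, say $A$, is infinite, hence has unbounded orbits on $T$. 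By the classification of isometries of trees, a finitely generated group with unbounded orbits contains a hyperbolic element, so I obtain $a\in A$ hyperbolic, with axis $\ell$ and translation length $\tau=\tau(a)\ge 1$.

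Next I would produce, from an arbitrary $b\in B\setminus\{1\}$, a \emph{nontrivial} element fixing $\ell$ pointwise. Since $b$ centralises $a$ it preserves $\ell$, and the restriction $b|_\ell$ centralises the nontrivial translation $a|_\ell$ inside $\aut(\ell)\cong D_\infty$; hence $b|_\ell$ is itself a translation, say by $t\in\Z$. Then $c:=a^{t}b^{-\tau}$ satisfies $c|_\ell=\id$ (a translation of $\ell$ by $t\tau-\tau t=0$), so $c$ fixes $\ell$ pointwise, and $c$ centralises $a$. If $c\ne 1$ I set $g:=c$. If $c=1$, then $a^{t}=b^{\tau}\in A\cap B=\{1\}$, so $a^{t}=1$ and therefore $t=0$ (as $a$ has infinite order), so $b$ itself fixes $\ell$ pointwise and I set $g:=b\ne 1$. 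In either case I have obtained $g\in G(F,F')\setminus\{1\}$ fixing $\ell$ pointwise and centralising the hyperbolic element $a$.

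Finally I would derive a contradiction, and this is where freeness of $F$ is used. For $v\in\ell$ the local permutation $\sigma(g,v)$ fixes the two colours of the edges of $\ell$ at $v$; since $\sigma(g,v)\in F$ for all but finitely many $v$ and $F$ acts freely, $\sigma(g,v)=\id$ — hence $g$ fixes every neighbour of $v$ — for all but finitely many $v\in\ell$. Propagating outwards, one checks that any vertex of the fixed subtree $Y:=\{v\in V(T):gv=v\}$ adjacent to a vertex outside $Y$ must have $\sigma(g,\cdot)\notin F$: such a vertex also has a neighbour in $Y$ towards $\ell$ whose connecting edge $g$ fixes, so $\sigma(g,\cdot)$ fixes a colour while not being the identity. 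Hence the frontier of $Y$ is finite, so $T\setminus Y$ is a \emph{finite} disjoint union $B_1\sqcup\dots\sqcup B_r$ of subtrees, each attached to $Y$ at a single vertex, with $r\ge 1$ because $g\ne 1$. Since $a$ centralises $g$, it preserves $Y$ and permutes $B_1,\dots,B_r$, so some power $a^{N}$ with $N\ge 1$ fixes $B_1$ setwise; but $a^{N}$ is hyperbolic with axis $\ell$, and the nearest-point projection $T\to\ell$ — which is equivariant for the stabiliser of $\ell$ and sends all of $B_1$ to a single vertex of $\ell$ (every geodesic from $B_1$ to $\ell$ passes through the unique vertex of $Y$ adjacent to $B_1$) — would force that vertex to be fixed by the nontrivial translation $a^{N}|_\ell$, a contradiction. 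Therefore $H$ is finite. The delicate point is the middle step: $B$ is a priori an arbitrary finitely generated group, so one must be sure that commuting with the \emph{single} hyperbolic element $a$, after the harmless correction $b\mapsto a^{t}b^{-\tau}$ and using crucially that the product is direct (so $A\cap B=\{1\}$), always yields a nontrivial element fixing the whole axis pointwise; granting that, freeness of $F$ does the rest, as it is precisely what upgrades ``fixes the line $\ell$ pointwise'' to ``fixes all but finitely many branches of $T$'', a rigidity incompatible with the existence of a hyperbolic element having axis $\ell$.
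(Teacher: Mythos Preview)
Your argument is correct and follows the same overall strategy as the paper's proof: use discreteness of $G(F,F')$ (from freeness of $F$) to reduce to showing no direct product $A\times B$ with $A,B\neq 1$ can contain a hyperbolic element, then exploit that any element centralising a hyperbolic $a$ preserves its axis, and finally use freeness of $F$ to force such an element to have only a finite ``defect'' off the axis, contradicting hyperbolicity.

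The execution differs in two places. First, to produce a nontrivial element fixing the axis pointwise, the paper uses the translation-length homomorphism $\langle a\rangle\times B\to\Z$ and argues with its kernel, concluding that every $b\in B$ fixes the axis pointwise; you instead correct a single $b$ by a power of $a$ and use $A\cap B=\{1\}$ to ensure nontriviality. Second, the contradiction is packaged differently: the paper observes that the finite ``support'' of $b$ on the axis is shifted by conjugation by $a$, forcing $b=1$, and then needs an extra step (both factors elliptic $\Rightarrow$ common fixed point). Your component-permutation argument, pushing the finitely many branches of $T\setminus\mathrm{Fix}(g)$ around by $a$ and projecting to $\ell$, reaches the contradiction in one stroke and bypasses the common-fixed-point step entirely. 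Both are short; yours is slightly more self-contained, while the paper's support-shift is perhaps more transparent about where freeness of $F$ enters.
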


\begin{proof}
    It is sufficient to prove that when $H<G(F, F')$ is a finitely generated direct product, it fixes a point. Indeed, since the point-stabilizers in $G(F, F')$ are locally elliptic and \(G(F,F')\) is discrete when \(F\) acts freely \cite[Section 3.1]{lebouinventiones}, then the point stabilizers are also locally finite. This would imply that $H$ is finite.\\
    Let $H\cong A \times B$, with $A \ncong 1$ and $B \ncong 1$ and let us assume by contradiction that there exists a loxodromic element $a \in A$ with axis $r$. Then we have a homomorphism $\phi:<a>\times B \rightarrow \Z$ given by the translation length along $r$, whose kernel is either trivial or consists of elliptic elements \cite[Lemma 5.3]{button2019groupsactingfaithfullytrees}. 
    In the first case, since the restriction of $\phi$ to $<a>\times \id_{B}$ is an isomorphism, then $B$ would have to be trivial. Otherwise, $B$ consists of elliptic elements which fix $r$ pointwise. For each $b \in B$ there exists a finite subset of vertices of $r$ such that $b$ moves the neighbors of these vertices, let us call this the \emph{support} of $b$. Then the support of $a^{-1}ba$ consists of a unitary translation (along the axis $r$) of the support of $b$.
    But $a$ and $b$ commute, so this implies that $b=\id$. This shows that $A$ and $B$ consist only of elliptic elements, and since they are finitely generated it follows that both $A$ and $B$ admit a global fixed point in $T$ \cite[Corollary 2.5]{GGT18}.
    Since $B$ is elliptic and acts on the tree of fixed points of $A$, it has a limited orbit in this tree, and hence it has a fixed point \cite[Corollary 2.5]{GGT18}, which is in fact a fixed point for the direct product $A \times B$.
\end{proof}

We state the following classical lemma, which will be useful later on.

\begin{lemma}[{\cite[Lemma 7.1.1]{Robinson1996}}]\label{stabiltrans}
    Let $n\geq 3$ be a natural number, $\Omega=\{1, \ldots, n\}$ and let $F< \sym(\Omega)$ be a permutation group. Then $F$ is 2-transitive if and only if the stabilizer of each point $x\in \Omega$ in $F$ acts transitively on $\Omega \setminus\{x\}$.
\end{lemma}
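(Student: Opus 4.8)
The plan is to prove the two implications separately, using nothing beyond the definition of $2$-transitivity and the standing hypothesis $n\geq 3$.

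Suppose first that $F$ is $2$-transitive, and fix $x\in\Omega$. Given $y,z\in\Omega\setminus\{x\}$, the ordered pairs $(x,y)$ and $(x,z)$ consist of distinct elements of $\Omega$, so by $2$-transitivity there is $g\in F$ with $g(x)=x$ and $g(y)=z$; such a $g$ lies in the point stabilizer $F_x$ and sends $y$ to $z$. Hence $F_x$ acts transitively on $\Omega\setminus\{x\}$, and since $x$ was arbitrary this gives one direction.

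For the converse, assume that for every $x\in\Omega$ the stabilizer $F_x$ acts transitively on $\Omega\setminus\{x\}$. I would first deduce that $F$ itself is transitive on $\Omega$: given distinct $x,y\in\Omega$, since $n\geq 3$ we may pick a third point $z\in\Omega\setminus\{x,y\}$, and then $x,y\in\Omega\setminus\{z\}$, so transitivity of $F_z$ on $\Omega\setminus\{z\}$ provides $g\in F_z\leq F$ with $g(x)=y$. Next I would upgrade transitivity to $2$-transitivity: given pairs $(a,b)$ and $(c,d)$ of distinct elements of $\Omega$, use transitivity of $F$ to choose $h\in F$ with $h(a)=c$, observe that $h(b)\neq c$, and then use transitivity of $F_c$ on $\Omega\setminus\{c\}$ to choose $k\in F_c$ with $k(h(b))=d$. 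The composition $kh$ then satisfies $kh(a)=k(c)=c$ and $kh(b)=k(h(b))=d$, which is precisely $2$-transitivity.

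The whole argument is elementary. The only step that requires a moment's care is that transitivity of $F$ is not part of the hypothesis but has to be extracted from the stabilizer condition, and this is exactly the place where the assumption $n\geq 3$ is used; everything else is a routine composition of permutations, so I do not anticipate any real obstacle.
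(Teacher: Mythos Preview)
Your argument is correct. Note, however, that the paper does not supply its own proof of this lemma: it merely states the result and cites \cite[Lemma 7.1.1]{Robinson1996}. So there is nothing to compare against beyond observing that what you have written is the standard elementary proof one finds in any textbook treatment, and in particular your use of the hypothesis $n\geq 3$ to extract transitivity of $F$ from the stabilizer condition is exactly the point of that hypothesis.
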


\section{Bounded cohomology of groups}\label{sec:bc}

Let $G$ be a locally compact group. Let us denote by $C^n_{\textup{cb}}(G)$ the space of continuous bounded maps from $G^{n+1}$ to $\R$, where \(\R\) is endowed with the Euclidean topology. We have an action of $G$ on such functions defined by 
\[(g\cdot f)(g_0,\dots,g_n)=f(g^{-1}g_0,\dots, g^{-1}g_n).\]
Let us denote by $C^n_{\textup{cb}}(G)^G$ the subset of $G$-invariant continuous bounded functions from $G^{n+1}$ to $\R$. We call \emph{real bounded cohomology} of $G$ the cohomology of the following complex of vector spaces:
\[
	0 \rightarrow C^0_{\textup{cb}}(G)^G \xrightarrow{\delta^0} C^1_{\textup{cb}}(G)^G \xrightarrow{\delta^1} C^2_{\textup{cb}}(G)^G \xrightarrow{\delta^2}\cdots\,,
\]
where the differential maps  are defined by \[(\delta^{n-1} f)(g_0,\dots,g_n) = \sum_{i=0}^n (-1)^i f(\dots,\hat{g_i},\dots).\] 
In symbols, the $n$-th continuous bounded cohomology of $G$ with $\R$-coefficients is defined as  \[H^n_{\textup{cb}}(G; \R)\coloneqq \frac{\ker(\delta^n)}{\delta^{n-1}(C_{\textup{cb}}^{n-1}(G)^G)}.\]

\begin{defn}
    A group $G$ is \emph{boundedly acyclic} if $H^n_{\textup{cb}}(G; \R)=0$ for every $n\geq1$.
\end{defn}

The inclusion of bounded cochains in the usual cochains induces a map between the continuous bounded cohomology and the ordinary continuous cohomology of \(G\) called the \emph{comparison map}:
\[c^n\colon H_{\textup{cb}}^n(G;\R) \to H^n_{\textup{c}}(G;\R).\] This map is in general neither injective nor surjective \cite{Mon01, frigerio:book}.

\subsection{Quasimorphisms}
We recall the relation between second bounded cohomology and quasimorphisms \cite[Section 2.3]{frigerio:book}. Moreover, we define the class of \emph{median quasimorphisms} for groups acting on trees.

\begin{defn}
    Let \(G\) be a group. A quasimorphism is a function \(\phi\colon G\to~\R\) for which there is a least finite constant \(D(\phi) \geq 0\) such that
\[|\phi(ab) -\phi(a)- \phi(b)| \leq D(\phi)\]
for all \(a, b \in G\). We denote by \(Q(G)\) the vector space of quasimorphisms of \(G\). If \(G\) is a topological group we denote with \(Q_{\textup{c}}(G)\) the subspace of continuous quasimorphisms.\\
A quasimorphism \(\phi\colon G\to \R\) is \emph{homogeneous} if \(\phi(g^n)=n\phi(g)\) for every \(g\in G\) and for every \(n\in \Z\).
\end{defn}

Trivial examples of continuous quasimorphisms are continuous bounded functions, which we denote by $C_{\textup{cb}}^0(G;\R)$, and continuous homomorphisms from $G$ to $\R$, which are also homogeneous, and which we denote by $\textup{Hom}_\textup{c}(G)$. Given any quasimorphism \(\phi\in Q(G)\) we can obtain a homogeneous quasimorphism at finite distance from it via the limit:
\[\phi^h(g)\coloneqq \lim_{n\to \infty}\frac{\phi(g^n)}{n}.\]
This limit exists and defines a quasimorphism whose distance from $\phi$ is bounded by the defect $D(\phi)$ \cite[Lemma 2.21]{calegari2009scl}. 

Quasimorphisms allow us to compute the second bounded cohomology of groups thanks to the following result.

\begin{prop}[{\cite[Corollary 13.3.2]{Mon01}}]
    Let \(G\) be a locally compact group. There is a canonical
isomorphism of vector spaces
\[ \ker(c^2) \cong \frac{Q_{\textup{c}}(G)}{C_{\textup{b}}(G;\R)\bigoplus \textup{Hom}_{\textup{c}}(G)}\ .\]
\end{prop}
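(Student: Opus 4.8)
The plan is to produce an explicit linear map $\Phi\colon Q_{\textup{c}}(G)\to\ker(c^2)$, show that it is surjective, and identify its kernel with $C_{\textup{b}}(G;\R)\oplus\textup{Hom}_{\textup{c}}(G)$. Throughout I would compute the continuous cohomology $H^\bullet_{\textup{c}}(G;\R)$ by means of the complex, call it $C^\bullet_{\textup{c}}(G)^G$, of $G$-invariant continuous (not necessarily bounded) real functions on $G^{\bullet+1}$, equipped with the same differentials $\delta^\bullet$ as in Section~\ref{sec:bc}; with this model the comparison map $c^\bullet$ is literally the map induced by the inclusion $C^\bullet_{\textup{cb}}(G)^G\hookrightarrow C^\bullet_{\textup{c}}(G)^G$. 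The single non-formal ingredient is that this complex does compute $H^\bullet_{\textup{c}}(G;\R)$: this is part of the general setup of relatively injective resolutions in \cite{Mon01}, and it is exactly what licenses choosing trivializing cochains to be continuous and $G$-invariant.

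For $\phi\in Q_{\textup{c}}(G)$ put $\widehat\phi(g_0,g_1):=\phi(g_0^{-1}g_1)$, a continuous $G$-invariant (in general unbounded) $1$-cochain. Writing $a=g_0^{-1}g_1$ and $b=g_1^{-1}g_2$, one computes $(\delta^1\widehat\phi)(g_0,g_1,g_2)=\phi(a)+\phi(b)-\phi(ab)$, so $\|\delta^1\widehat\phi\|_\infty\le D(\phi)<\infty$; hence $\delta^1\widehat\phi$ is a $G$-invariant continuous bounded $2$-cocycle and I define $\Phi(\phi):=[\delta^1\widehat\phi]\in H^2_{\textup{cb}}(G;\R)$. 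Since $\widehat\phi$ trivializes $\delta^1\widehat\phi$ inside $C^\bullet_{\textup{c}}(G)^G$, we get $c^2(\Phi(\phi))=0$, so $\Phi$ indeed lands in $\ker(c^2)$, and it is clearly linear. For surjectivity, let $\alpha\in\ker(c^2)$ and pick a $G$-invariant continuous bounded $2$-cocycle $\omega$ with $[\omega]=\alpha$; as $c^2(\alpha)=0$ there is $\beta\in C^1_{\textup{c}}(G)^G$ with $\delta^1\beta=\omega$, and $G$-invariance forces $\beta(g_0,g_1)=\beta(e,g_0^{-1}g_1)$, i.e.\ $\beta=\widehat\psi$ where $\psi(g):=\beta(e,g)$ is continuous. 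Specializing the previous identity to $(g_0,g_1,g_2)=(e,a,ab)$ gives $|\psi(a)+\psi(b)-\psi(ab)|=|\omega(e,a,ab)|\le\|\omega\|_\infty$ for all $a,b\in G$, so $\psi\in Q_{\textup{c}}(G)$ and $\Phi(\psi)=[\delta^1\widehat\psi]=[\omega]=\alpha$.

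It remains to identify $\ker\Phi$. If $\Phi(\phi)=0$ then $\delta^1\widehat\phi=\delta^1\beta$ for some $\beta\in C^1_{\textup{cb}}(G)^G$, so $\widehat\phi-\beta$ is a continuous $G$-invariant $1$-cocycle; by the same invariance it equals $\widehat f$ with $f(g):=\phi(g)-\beta(e,g)$, and the cocycle relation $f(a)+f(b)-f(ab)=0$ says $f\in\textup{Hom}_{\textup{c}}(G)$, while $\beta(e,\cdot)\in C_{\textup{b}}(G;\R)$, whence $\phi=f+\beta(e,\cdot)$. Conversely $\Phi$ kills $\textup{Hom}_{\textup{c}}(G)$, since for a homomorphism $f$ the cochain $\widehat f$ is a genuine cocycle ($\delta^1\widehat f=0$), and it kills $C_{\textup{b}}(G;\R)$, since for bounded $h$ the cochain $\widehat h$ lies in $C^1_{\textup{cb}}(G)^G$ so $\delta^1\widehat h$ is a bounded coboundary; the sum is direct because a bounded homomorphism $G\to\R$ vanishes. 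Therefore $\ker\Phi=C_{\textup{b}}(G;\R)\oplus\textup{Hom}_{\textup{c}}(G)$ and $\Phi$ descends to the stated isomorphism. Modulo invoking the continuous-cohomology machinery in the surjectivity step, this is a diagram chase whose only real content is that, after the substitution $a=g_0^{-1}g_1$, $b=g_1^{-1}g_2$, the degree-$2$ cocycle identity \emph{is} the quasimorphism inequality; so the main (and essentially only) obstacle is having the correct continuous resolution computing $H^\bullet_{\textup{c}}(G;\R)$ available, which I would simply quote from \cite{Mon01}.
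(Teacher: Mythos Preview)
The paper does not prove this proposition; it is stated with a citation to \cite[Corollary 13.3.2]{Mon01} and used as a black box. Your argument is the standard one (essentially the proof one would find in \cite{Mon01} or \cite[Section 2.3]{frigerio:book}): associate to a continuous quasimorphism $\phi$ the inhomogeneous bounded $2$-cocycle $(a,b)\mapsto\phi(a)+\phi(b)-\phi(ab)$, check that it lands in $\ker(c^2)$, and identify the kernel via a direct computation. The only point requiring external input, which you correctly flag, is that the homogeneous complex of continuous $G$-invariant cochains computes $H^\bullet_{\textup{c}}(G;\R)$; this is indeed available from \cite{Mon01}. The proof is correct.
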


Let now $G$ be a locally compact group which acts (simplicially) on a locally finite tree. On such groups, we define median quasimorphisms as follows.

\begin{defn}
    Let \(s\) be an oriented geodesic segment of a tree \(T\). For any vertex \(v\in V(T)\), we define the \emph{median quasimorphism} 
\[f_{s,v} \colon G \to  \Z,\] 
\begin{eqnarray*}f_{s,v}(g)=\#\{\mbox{oriented \(G\)-translates of \(s\) in \([v, g\cdot v]\)}\}\\-\#\{\mbox{oriented \(G\)-translates of \(s\) in \([g\cdot v, v]\)}\}.
\end{eqnarray*}
  
\end{defn}

In order to represent classes in the continuous second bounded cohomology, homogeneous median quasimorphisms must be continuous, as observed in the following remark.

\begin{rem}[Homogenized median quasimorphisms are continuous]\label{continuous}
    The homogenization of median quasimorphisms can be described in the same way as the one of Brooks quasimorphisms \cite[Theorem 3.37]{FF20}, i.e., counting the number of $G$-translates on the circular path \([v, g\cdot v]/_{g\cdot v=v}\). This shows that the homogenization of median quasimorphisms has values in \(\Z\).
    Since every homogeneous quasimorphism from a locally compact group to the integers is continuous \cite[Theorem A.1]{CF10}, we get the continuity of homogenizations of median quasimorphisms. 
\end{rem}

 The following result by Iozzi, Pagliantini and Sisto will play a crucial role in the proof of the second item of Theorem \ref{principale}. We state it for continuous bounded cohomology of locally compact groups. Indeed, in this case the homogeneization of the median quasimorphisms, which appear in the proof of Iozzi Pagliantini and Sisto, are continuous by Remark \ref{continuous}, and they provide an infinite number of linearly independent continuous bounded cohomology classes.

\begin{thm}{\cite[Theorem 1]{IPS20}}\label{thmIozzi}
    Let $G$ be a locally compact group that acts minimally on a regular locally finite tree $\mathcal{T}$ such that every vertex has valence greater or equal than 2. Then one of the following holds:
    \begin{itemize}
        \item[(a)] $G$ fixes a point in the boundary $\partial \mathcal{T}$ of $\mathcal{T}$;
        \item[(b)] $G$ acts transitively on the sets of geodesics of any fixed length starting at any vertex (i.e., the stabilizer of any fixed vertex  acts in such a way);
        \item[(c)] There exists an infinite family of linearly independent coclasses in $H^2_{\textup{cb}}(G, \R)$ given by median quasimorphisms.
    \end{itemize}
\end{thm}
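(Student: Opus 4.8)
The plan is to establish the trichotomy by proving that if (a) and (b) both fail, then (c) holds; throughout we use that $\mathcal{T}$ is $0$-hyperbolic together with the standard classification of isometric group actions on trees.

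\emph{Reduction to a general-type action.} Since $G$ acts minimally and, as (a) fails, fixes no point of $\partial\mathcal{T}$, the action is either lineal --- $G$ preserves a line, which by minimality forces $\mathcal{T}$ to equal that line; then $G$ is virtually cyclic, hence amenable with $H^2_{\textup{cb}}(G;\R)=0$, and a direct inspection shows that (a) or (b) must hold, against our hypothesis --- or else the action is of \emph{general type}: $G$ contains loxodromic elements whose axes have no common endpoint at infinity. We assume the latter. We will use repeatedly that, for $N$ large, suitable pairs of loxodromic elements generate free subgroups acting by ping-pong, so that $G$ contains many loxodromic elements (in particular loxodromic commutators) whose axes can be placed so as to traverse prescribed finite patterns and whose combinatorics on a bounded neighbourhood of the axis is under control.

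\emph{Reading the failure of (b).} The failure of (b) provides a vertex $v_0$ and an integer $n_0$ such that $\mathrm{Stab}_G(v_0)$ is not transitive on the length-$n_0$ geodesics issuing from $v_0$; equivalently, two oriented geodesic segments of length $n_0$ with common initial vertex $v_0$ lie in distinct $G$-orbits. I first isolate the phenomenon: \emph{if every oriented segment of $\mathcal{T}$ were $G$-equivalent to its reversal, then (b) would hold}. Indeed, given length-$n$ geodesics $\alpha,\beta$ from a vertex $v$, choose (using valence $\geq 2$; the valence-$2$ case is immediate) an auxiliary length-$n$ geodesic $\mu$ from $v$ whose first edge differs from those of $\alpha$ and $\beta$; then $\overline{\alpha}\ast\mu$ is a geodesic of length $2n$ with midpoint $v$, and any $g\in G$ carrying it to its reversal $\overline{\mu}\ast\alpha$ must fix the midpoint $v$ and interchange the two halves, so $g\alpha=\mu$, and symmetrically $\beta\sim_{\mathrm{Stab}(v)}\mu$; hence $\alpha\sim_{\mathrm{Stab}(v)}\beta$. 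Therefore $\neg$(b) yields an oriented segment $w_0$ with $w_0\not\sim_{G}\overline{w_0}$. It matters that we retain the full strength of $\neg$(b) and not merely this asymmetry: the two conditions are not equivalent --- for instance the index-$2$ bipartition-preserving subgroup of a universal Burger--Mozes group over a $2$-transitive $F'$ satisfies (b) while each of its odd-length segments is inequivalent to its reversal.

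\emph{Constructing the family, and conclusion.} This is the heart of the proof and its main obstacle. Following the counting-quasimorphism technique of Bestvina--Fujiwara \cite{BF02}, with the median quasimorphisms $f_{s,v}$ in the role of the counting functions, one constructs \emph{inductively} an increasing sequence of oriented segments $s_1,s_2,\dots$ with $|s_k|\to\infty$ and a sequence of loxodromic commutators $g_1,g_2,\dots\in[G,G]$ (obtained from ping-pong pairs, with their axes traversing prescribed patterns) such that: $s_k$, built by planting a $G$-translate of $w_0$ at an off-centre position of an otherwise ``generic'' long segment --- this is precisely where $\neg$(b) enters, guaranteeing that enough genericity is available --- satisfies $s_k\not\sim_{G}\overline{s_k}$; and along the axis of $g_k$ exactly one $G$-translate of $s_k$ and no $G$-translate of $\overline{s_k}$ occurs, whereas neither occurs along the axis of $g_j$ for $j\neq k$. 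Consequently $f^{h}_{s_i,v}(g_j)\neq 0$ if and only if $i=j$. Since every homomorphism vanishes on the commutators $g_j$, the homogeneous median quasimorphisms $f^{h}_{s_k,v}$ --- which are integer-valued and hence continuous by Remark \ref{continuous} --- are linearly independent modulo $C_{\textup{b}}(G;\R)\oplus\textup{Hom}_{\textup{c}}(G)$. By \cite[Corollary 13.3.2]{Mon01} they span an infinite-dimensional subspace of $\ker c^2\subseteq H^2_{\textup{cb}}(G;\R)$, which is alternative (c). The delicate point throughout is the simultaneous control, along the constructed axes, of \emph{all} $G$-translates --- not merely the cyclic translates --- of the segments $s_k$ and $\overline{s_k}$.
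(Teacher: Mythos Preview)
The paper does not give a proof of this statement: it is cited as a black box from \cite[Theorem~1]{IPS20}, the sole addition being Remark~\ref{continuous}, which guarantees that the homogeneous median quasimorphisms produced in \cite{IPS20} are continuous and hence yield classes in \emph{continuous} bounded cohomology. There is thus no ``paper's own proof'' to compare your attempt against.

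Your sketch goes well beyond what the paper offers, and its architecture --- assume $\neg$(a) and $\neg$(b), reduce to a general-type action, extract from $\neg$(b) an oriented segment inequivalent to its reversal, and feed this into a Bestvina--Fujiwara-style construction of median quasimorphisms --- is indeed the strategy of \cite{IPS20}. Your second step, showing that if every oriented segment were $G$-equivalent to its reversal then (b) would hold, is clean and correctly argued. Two caveats, however. First, the lineal reduction is not watertight: $G$ need not be virtually cyclic (the kernel of the action on the line is unconstrained), and ``direct inspection'' does not actually establish that (a) or (b) holds --- for instance $D_\infty$ acting on $\Z$ by $n\mapsto n+2$ and $n\mapsto 1-n$ is minimal, fixes no end, has trivial vertex stabilizers, and is amenable, so none of (a), (b), (c) holds; this valence-$2$ edge case is immaterial for the paper (which applies the theorem only with $d\ge 3$) but it shows your reduction is incomplete as written. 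Second, and more substantially, your third step is an outline rather than a proof: you correctly flag the construction of the $s_k$ and the loxodromic $g_k$, and the control of \emph{all} $G$-translates of $s_k$ and $\overline{s_k}$ along the axes, as the crux, but you do not carry it out. That construction is precisely the technical content of \cite{IPS20}, and your sketch does not explain concretely how the full strength of $\neg$(b) supplies the ``genericity'' you invoke.
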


We omit the mutual exclusivity of the three propositions in the statement of Theorem \ref{thmIozzi} referring to the observation in Hofmann's master thesis \cite{Franziska}.

\section{Aligned chains and aligned complex}\label{sec:aligned}

In this section we recall the definition of the \emph{complex of aligned chains}, as introduced by Bucher and Monod \cite{BM17}.\\
Let \(\mathcal{C}_*(T, \R)\) denote the module of \(\R\)-valued
alternating \(n\)-chains on the tree \(T\), that is the quotient of the free \(\R\)-module on \(T^{n+1}\) by the equivalence relation obtained identifying an \((n+1)\)-tuple with its permutations with sign (i.e., \((x_0, \dots , x_n) \sim \textup{sgn}(\sigma)(x_{\sigma(0)}, \dots , x_{\sigma (n)})\), for all \(\sigma \in \sym(\Omega)\)). We obtain the resolution
\[0 \leftarrow \R \leftarrow \mathcal{C}_0(T, \R) \leftarrow \mathcal{C}_1(T, \R) \leftarrow \mathcal{C}_2(T, \R) \leftarrow\cdots \]
with boundary maps \(\delta_{n-1}:\mathcal{C}_n(T,\R)\to \mathcal{C}_{n-1}(T,\R)\) defined by \[\delta_{n-1}((x_0,\dots,x_n)) \coloneqq
\sum_{j=0}^n
(-1)^j(x_0,\dots,\hat{x}_j,\dots, x_n),\] where we denote with \(\hat{x}_j\) the omitted variable, and \(\delta_{-1}\colon \mathcal{C}_0(T,\R)\to \R\)
 is given by the summation. It is easy to check that this is an exact sequence. 

\begin{defn}[aligned chains]
    The complex of (alternating) \emph{aligned chains} is the subcomplex \(\mathcal{A}_*(T, \R)\) of
\(\mathcal{C}_*(T, \R)\) spanned by those \((n + 1)\)-tuples that are contained in some geodesic segment in \(T\).
\end{defn}

Unless \(T\) is a linear tree, \(\mathcal{A}_n(T, \R)\) is a proper subgroup of \(\mathcal{C}_n(T, \R)\) provided that \(n \geq 2\).\\
We endow \(\mathcal{C}_*(T, \R)\) and
\(\mathcal{A}_*(T, \R)\) with the quotient norm induced by the \(\ell^1\)-norm on
\(\ell^1(T^{n+1})\). One can check that the boundary map is a 
bounded operator with respect to this norm. We can take the topological dual of the normed spaces and obtain the cochain complex

\[0\to \R \to \ell^{\infty}_{\mathcal{A}}(T,\R)\to \ell^{\infty}_{\mathcal{A}}(T^2,\R)\to \ell^{\infty}_{\mathcal{A}}(T^3,\R)\to \cdots \]
where \(\ell^{\infty}_{\mathcal{A}}(T^n,\R)\) is the space of bounded alternating functions on \(T^{n}\) that are supported on aligned tuples. As proved by Bucher and Monod, this is a resolution for \(\R\) \cite[Corollary 8]{BM17}.

\section{Proof of Theorem \ref{principale} and Theorem \ref{secondario}}\label{sec:proofs}

This section is devoted to the proof of Theorem \ref{principale}. The first part of it will follow as a corollary of Theorem \ref{secondario}, which we state again for convenience of the reader.

\begin{reptheorem}{secondario}\label{bcsegmfinleng}
    Let $G$ be a locally compact group acting amenably and without inversions on a locally finite (regular) tree $\mathcal{T}_d$.\ Let us suppose that 
    there exists a geodesic line $L$ in $T$ with the following properties:
    \begin{itemize}
        \item [(i)] for every segment of any (fixed) finite length $[x,y]$ there exists $g \in G$ such that $[gx, gy]\subset L$;
        \item [(ii)]the set-wise stabilizer of $L$ acts transitively on the geometric edges of $L$.
    \end{itemize}
    Then $H^n_{\textup{cb}}(G;\R)=0$ for each $n\geq1$.
\end{reptheorem}

\begin{proof}
    
We follow the blueprint of the proof of Bucher and Monod \cite[Theorem 1]{BM17}, but we need some changes. 

Let us identify $T$ with its set of vertices. We equip $T$ with a probability measure of full support, and we observe that $T$ is an amenable regular $G$-space in the sense of Monod \cite[Definition 2.1.1]{Mon01}. Indeed the action of $G$ on $T$ is measure-class preserving, and amenable by assumption. The space \(T^n\), endowed with the product probability, is also an amenable regular \(G\)-space \cite[Example 2.1.2, (ii)]{Mon01} and since the action of $G$ on \(T^n\) preserves aligned tuples, we have that also the subspace of aligned tuples is an amenable regular $G$-space.

 Using the amenability of the action on aligned tuples and a result by Monod \cite[Theorem 7.5.3]{Mon01} we have that the resolution
    \[0 \to \ell^{\infty}_{\mathcal{A}}(T, \R)^G \to \ell^{\infty}_{\mathcal{A}}(T^2, \R)^G \to \ell^{\infty}_{\mathcal{A}}(T^3, \R)^G \to \ldots\]
    computes the bounded cohomology of $G$.
    
    Let now $L$ be the geodesic line of the statement. We show that the set-wise stabilizer \(H\) of \(L\) is amenable. Given a vertex $v \in T$, let us denote by $G_v$ the stabilizer of $v$ in $G$. Since $G$ acts without inversions,
    we have that an element $g \in G$ can act on (the vertices of) $L$ either via translations or inverting a line rotating around a vertex (or a combination of the two). This gives a homomorphism $\phi:H \to D_{\infty}$, where \(D_{\infty}\) denotes the infinite dihedral group. We notice that $D_{\infty}$ is an amenable group (in fact, it is an extension of amenable groups), and so also the image of $\phi$ is amenable.
    The kernel of $\phi$ is equal to the intersection of all the point-wise stabilizers in $G$ of all the vertices of $L$, which are amenable since the action of $G$ on $T$ is amenable. We have the short exact sequence
    \[1\to \bigcap_{x\in L} G_x \to H \to \mbox{Im}(\phi) \to 1.\]
    Hence the stabilizer $H$ is an extension of amenable groups, so it is amenable.\\
    In order to conclude, it suffices to prove the following claim.
    \begin{claim}
        The map
        \[\psi \colon \ell^{\infty}_{\mathcal{A}}(T^{n+1}, \R)^G\to \ell^{\infty}_{\alt}(L^{n+1}, \R)^H\]
        induced by the restriction to $L$ is an isomorphism for every $n\geq 0$.
    \end{claim}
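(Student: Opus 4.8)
The plan is to show $\psi$ is a well-defined isometric isomorphism by constructing an explicit inverse using properties (i) and (ii). First I would check that $\psi$ is well-defined: if $f \in \ell^\infty_{\mathcal A}(T^{n+1},\R)^G$, then $f|_{L^{n+1}}$ is bounded, alternating, and (since every tuple of vertices on the line $L$ is aligned) its restriction lands in $\ell^\infty_{\alt}(L^{n+1},\R)$; moreover for $h \in H$, $G$-invariance of $f$ forces $H$-invariance of $f|_L$. Injectivity is the heart of the matter: I must show that a $G$-invariant aligned function on $T^{n+1}$ is determined by its values on $L^{n+1}$. Given any aligned $(n+1)$-tuple $(x_0,\dots,x_n)$ in $T$, it lies on some geodesic segment $[x,y]$ of some finite length $\ell$; by hypothesis (i) applied to segments of length $\ell$, there is $g \in G$ with $[gx_0,\dots,gx_n] \subset [gx,gy] \subset L$, so $f(x_0,\dots,x_n) = f(gx_0,\dots,gx_n)$ is read off from $f|_{L^{n+1}}$. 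This simultaneously proves injectivity and tells us how to define the candidate inverse.

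For surjectivity, given $\varphi \in \ell^\infty_{\alt}(L^{n+1},\R)^H$, I would define $\tilde\varphi$ on aligned tuples of $T$ by the recipe above: pick $g \in G$ moving the tuple into $L$ and set $\tilde\varphi(x_0,\dots,x_n) = \varphi(gx_0,\dots,gx_n)$, then extend by zero on non-aligned tuples. The work is to check this is well-defined independently of the choice of $g$: if $g,g'$ both carry the aligned tuple $(x_0,\dots,x_n)$ into $L$, then $g'g^{-1}$ carries the aligned tuple $(gx_0,\dots,gx_n)\subset L$ to another tuple $(g'x_0,\dots,g'x_n)\subset L$. Here is where property (ii) enters: the setwise stabilizer $H$ acts transitively on the geometric edges of $L$, and (combined with $G$ acting without inversions) this lets one adjust $g'g^{-1}$ by an element of $H$ so that it fixes the configuration on $L$; since $\varphi$ is $H$-invariant, the value $\varphi(g'x_0,\dots,g'x_n) = \varphi(gx_0,\dots,gx_n)$ is unchanged. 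One should also verify that $\tilde\varphi$ is $G$-invariant (immediate from the definition, since a $G$-translate of an aligned tuple can be moved into $L$ by the same recipe), alternating, and bounded with the same sup-norm, and that $\psi(\tilde\varphi) = \varphi$ (clear, since for a tuple already in $L$ one may take $g = \id$, using that $\id$ carries it into $L$).

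The main obstacle I expect is the well-definedness in surjectivity — precisely pinning down why two elements of $G$ that both move a given aligned tuple onto the line differ by an element of $H$ up to something acting trivially on the relevant vertices of $L$. The subtlety is that $g'g^{-1}$ need not itself stabilize $L$: it only sends one finite sub-configuration of $L$ to another. One must argue that such a configuration-to-configuration isometry of the line can be realized by an element of $H$ (using transitivity on geometric edges from (ii) to match up orientation and position, and absence of inversions to rule out the problematic reflection) and that the discrepancy acts trivially on the tuple in question. Once this compatibility is established, everything else — boundedness, the alternating property, $G$-invariance, and the identity $\psi \circ (\varphi \mapsto \tilde\varphi) = \id$ and vice versa — follows formally.
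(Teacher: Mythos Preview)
Your proposal is correct and follows essentially the same route as the paper: injectivity via hypothesis (i) moving any aligned tuple into $L$, and surjectivity by defining $\tilde\varphi(\bar x)=\varphi(g\bar x)$ and checking well-definedness using edge-transitivity (ii) together with the no-inversions assumption. The paper makes the well-definedness step concrete exactly as you anticipate: it picks the first edge $\{x_0,y\}$ of the segment, uses (ii) to find $h\in H$ with $h\{gx_0,gy\}=\{g'x_0,g'y\}$, rules out the swap $hgx_0=g'y$ because $g'^{-1}hg$ would then invert the edge $\{x_0,y\}$, and concludes that $h$ matches the whole segment since an isometry of the line is determined by an oriented edge.
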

\begin{proof}[Proof of the claim]
    Each aligned tuple is contained in a segment of finite length in some geodesic $L'$. Hence, by assumption for each aligned tuple we can find an element \(g\in G\)
    that sends the given tuple into a new tuple which lies entirely in $L$. This proves that the map is injective.
    For the surjectivity, we take an element $f \in \ell^{\infty}_{\alt}(L^{n+1}, \R)^H$ and we proceed to extend it to a function $\Tilde{f}\in \ell^{\infty}_{\mathcal{A}}(T^{n+1}, \R)^G$. We pick an aligned tuple $\Bar{x}=(x_0, \ldots, x_n)$ in $T$, and we may assume that it lies in the geodesic segment $[x_0, x_n]$. By assumption, there exists $g \in G$ such that $g([x_0, x_n]):=[x_0', x_n']\subset L$, i.e., such that $g\Bar{x}$ lies on $L$. Now we claim that if $g'$ is any other element with the property that $g'\Bar{x}$ lies in $L$, then $f(g\Bar{x})=f(g'\Bar{x})$. The claim implies that we can set $\Tilde{f}(\Bar{x})=f(g\Bar{x})$ in a well-posed and $G$-invariant way.

    In order to prove the claim, we look for an element $h \in H$ such that $hg\Bar{x}=g'\Bar{x}$. Since $\Bar{x}$ is contained in the segment $[x_0, x_n]$, it suffices to find an $h$ such that $hgx_0=g'x_0$ and $hgx_n=g'x_n$. Let $y$ be the first vertex after $x_0$ in the segment $[x_0, x_n]$. 
    Since $H$ acts transitively on geometric edges (i.e.\ unordered pairs of vertices at distance one) of $L$ by assumption \((ii)\), there exists $h \in H$ such that $h\{gx_0, gy\}=\{g'x_0, g'y\}$.
    If we exclude that $hgx_0=g'y$, then it follows that $hgx_0=g'x_0$, which would imply that $hgx_n=g'x_n$. Since $y$ is adjacent to $x_0$ and $G$ acts without inversions, the case $hgx_0=g'y$ cannot happen, and this proves the claim.
\end{proof}
This concludes the proof of the theorem as the cochain complex \(\ell^{\infty}_{\alt}(L^{\bullet};\R)^H\) computes the continuous bounded cohomology of \(H\) and so \[H_{\textup{cb}}^n(G;\R)\cong H_{\textup{cb}}^n(H;\R)=0\] for all \(n\geq 1\) by amenability of \(H\) \cite{Joh72}.
\end{proof}

\begin{rem}\label{remarkbuildings}
    Theorem \ref{secondario} can also be formulated in the context of buildings: in fact, the set of the $G$-translates of the line $L$ from the statement of the theorem is a system of apartments for the tree $T$ on which the group $G$ acts strongly transitively (see, e.g., \cite{Tits74} or \cite[Section 4.1]{AB08} for the basic definitions). In this perspective, Theorem \ref{secondario} can be seen as an analogue of a recent theorem from Monod \cite[Theorem B]{Mon24}. However, in our statement the assumption that the action is proper is replaced by requiring that the action is amenable. This is indeed needed: if $F$ is a proper subgroup of $F'$, the groups with almost prescribed local actions $G(F, F')$ cannot act properly on a tree \cite[Section 1.6]{LB15}.
\end{rem}

Using the theorem above to obtain the first statement, and Theorem \ref{thmIozzi} for the second statement, we can finally prove the main result of this paper, that we state again for convenience of the reader.

\begin{reptheorem}{principale}
    Let $d\geq 3$ be a natural number, let $\Omega=\{1, \ldots, d\}$ and let $F<F'<\sym(\Omega)$ be permutation groups such that $F'$ preserves the orbits of $F$. The following hold:
    \begin{itemize}
        \item[(1)]\label{item1} if $F'$ acts 2-transitively on $\Omega$, then the group $G(F, F')$ is boundedly acyclic;
        \item[(2)]\label{item2} otherwise, $H^2_{\textup{cb}}(G(F, F'))$ is infinite dimensional.
    \end{itemize}
\end{reptheorem}

\begin{proof}[Proof of Theorem \ref{principale}, item (1)]
First, we notice that $G(F, F')$ acts transitively on the vertices of $T$, since its subgroup $U(\id)$ does \cite[Section 3.1]{LB15}.

Moreover, since $F'$ acts 2-transitively on $\Omega$, it holds that $G(F, F')$ also acts transitively on segments of finite length. Indeed, given two segments \([x_1,x_n]\) and \([x_1',x'_n]\) of length \(n\in \N\), by vertex transitivity there exists an element $g\in G(F, F')$ such that $g(x_1)=x_1'$. Now, for each vertex \(v\in[x_1,x_n]\), the local action $\sigma(g,v)$ can be given by a permutation belonging to $F'$. Let $x_2$ and $x_2'$ be the vertices of the segments $[x_1, x_n]$ and $[x_1', x_n']$ adjacent to $x_1$ and $x_1'$, respectively. By 2-transitivity of the action of \(F'\) on the edges we can send the edge \((x_1,x_2)\) to the edge \((x_1',x_2')\). In this way, \(x_2\) is mapped to \(x'_2\) and by 2-transitivity of the action of \(F'\) on the edges which have \(x_2\) as one of the endpoints we can map the edge \((x_2,x_3)\) to \((x_2',x_3')\) while sending \((x_1,x_2)\) to the edge \((x_1',x_2')\). We argue in the same way for the remaining vertices of the segment and extend this map to an element of \(G(F,F')\) using the transitivity of \(F\) to define the local actions around the remaining vertices of the tree. In particular, we can send each segment into any given geodesic line $L$ with an even displacement length, i.e., such that the distance between one (hence, any) vertex and its image is even.

The action of \(G(F,F')\) on \(T\) endowed with a probability measure of full-support is amenable. Indeed, by Remark \ref{elliptic}, the stabilizers of the action of \(G(F,F')\) on \(T\) are amenable, and the equivalence relation defined by the action is amenable by transitivity \cite{amrelation}.

Let us consider the homomorphism $\eta: G(F, F') \to \Z/2\Z$ given by the parity of the displacement length of some vertex of $T$. If we denote by $G^+$ the kernel of $\eta$, we have that $G^+$ acts without inversions.
Since the restriction map in bounded cohomology, i.e., \(\mbox{res}\colon H_b^n(G(F,F');\R)\to H_b^n(G^+;\R)^{\Z/2\Z}\), is injective \cite[Proposition 8.8.5]{Mon01} we can prove the theorem for $G=G^+$.

We verify that \(G^+\) satisfies the hypothesis of Theorem \ref{secondario}:\\ The action of \(G^+\) on \(T\) is still amenable as \(G^+\) is a closed subgroup of a group acting amenably \cite[Lemma 5.4.3]{Mon01}. Moreover, any segment of finite length can be sent to a fixed geodesic line \(L\) by means of an element in \(G^+\) as argued above.\\
Now, let us prove that there exists a line $L$ such that the set-wise stabilizer \(H\) of $L$ in \(G^+\) acts transitively on the geometric edges of $L$.\\
 Let $\tau=(n_1\ldots n_k)\ldots(m_1 \ldots m_l)$ be a permutation in $F$ written as a product of disjoint cycles, and let us suppose that $k \geq2$ ($F$ is non-trivial, so such a permutation exists). For any vertex \(v\in T\) we denote with \(E(v)\) the set of edges having \(v\) as one of the endpoints. 
 Let $v_0$ be any vertex of $T$. We construct inductively a line $L$ passing through $v_0$ by choosing edges in the following way:
 we choose the edges \(e_{n_1}\) and \(e_{n_2}\) \(\in E(v_0)\) labeled with \(n_1\) and \(n_2\) respectively. Let \(v_{-1}\) and \(v_1\in V(T)\) be such that \(e_{n_1}=(v_{-1},v_0)\) and \(e_{n_2}=(v_0,v_1)\). We want to extend the geodesic segment \([v_{-1},v_1]\) on both directions by choosing the labeling of the edges (Figure \ref{figura1}).

\begin{figure}
\centering
\[\begin{tikzcd}[column sep=7.5pt,row sep=16pt]
	& \cdots & \bullet & \cdots & \bullet & \bullet & \cdots & \bullet & \bullet & \cdots & \bullet & \cdots \\
	\\
	\cdots && {v_{-2}} && {v_{-1}} && {\textcolor{orange}{v_0}} && {v_1} && {v_2} && \cdots \\
	\\
	& \bullet & \cdots & \bullet & \cdots & \bullet & \cdots & \bullet & \cdots & \bullet & \cdots
	\arrow["{n_1}"', no head, from=3-1, to=3-3]
	\arrow["{n_3}", shift left=3, no head, from=3-3, to=1-2]
	\arrow["{n_{k-1}}"'{pos=0.8}, no head, from=3-3, to=1-2]
	\arrow["{n_k}"', no head, from=3-3, to=1-3]
	\arrow["{n_2}"', no head, from=3-3, to=3-5]
	\arrow[dashed, no head, from=3-3, to=5-2]
	\arrow["{n_3}", shift left=3, no head, from=3-5, to=1-4]
	\arrow["{n_{k-1}}"'{pos=0.8}, no head, from=3-5, to=1-4]
	\arrow["{n_k}"', no head, from=3-5, to=1-5]
	\arrow["{n_1}"', no head, from=3-5, to=3-7]
	\arrow[dashed, no head, from=3-5, to=5-4]
	\arrow["{n_3}", no head, from=3-7, to=1-6]
	\arrow["{n_k}"', no head, from=3-7, to=1-8]
	\arrow["{n_2}"', no head, from=3-7, to=3-9]
	\arrow[dashed, no head, from=3-7, to=5-6]
	\arrow["{n_1}", no head, from=3-9, to=1-9]
	\arrow["{n_4}"', shift right=3, no head, from=3-9, to=1-10]
	\arrow["{n_k}"{pos=0.8}, no head, from=3-9, to=1-10]
	\arrow["{n_3}"', no head, from=3-9, to=3-11]
	\arrow[dashed, no head, from=3-9, to=5-8]
	\arrow["{n_1}", no head, from=3-11, to=1-11]
	\arrow["{n_k}"{pos=0.8}, no head, from=3-11, to=1-12]
	\arrow["{n_4}"', shift right=3, no head, from=3-11, to=1-12]
	\arrow["{n_2}"', no head, from=3-11, to=3-13]
	\arrow[dashed, no head, from=3-11, to=5-10]
\end{tikzcd}\]

\caption{Sketch of the argument in the proof of Theorem \ref{principale}, item (1). The horizontal line represent the geodesic line obtained by choosing the coloring of the edges as described.}
    \label{figura1}
\end{figure}
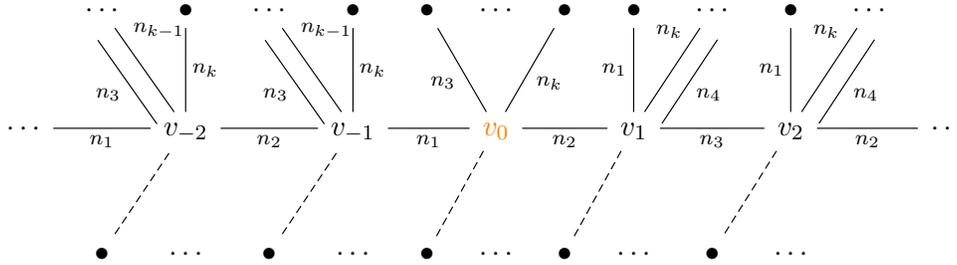

 If \(k=2\), we extend the segment by alternating edges labeled with \(n_1\) and \(n_2\) in both directions.

 If \(k\geq 3\), we extend the segment by choosing edges labeled with \(n_1\) and \(n_2\) alternatively in the direction of \(v_{-1}\), and edges labeled with \(n_2\) and \(n_3\) alternatively in the direction of \(v_1\) (this is the case displayed in Figure \ref{figura1}).

 Let $H$ be the set-wise stabilizer of the geodesic line \(L\) obtained in this way. 
 \begin{claim}
 The subgroup $H$ contains a translation along $L$ with displacement length equal to 2 and the rotation around the vertex \(v_0\).    
 \end{claim}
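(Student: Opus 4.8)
The plan is to realise both elements by prescribing their local permutations along $L$, exploiting the explicit colouring of $L$ coming from the construction. Write $e_i = \{v_{i-1}, v_i\}$ for the edges of $L$ (so $c(e_0) = n_1$ and $c(e_1) = n_2$). For $k \geq 3$ the construction gives $c(e_i) = n_1$ when $i \leq 0$ is even, $c(e_i) = n_2$ when $i \leq 0$ is odd, $c(e_i) = n_2$ when $i \geq 1$ is odd, and $c(e_i) = n_3$ when $i \geq 1$ is even; for $k = 2$ one simply has $c(e_i) = n_1$ for $i$ even and $c(e_i) = n_2$ for $i$ odd. The key feature is that $L$ is $2$-periodically coloured in each direction, the only defect being the change of colour palette from $\{n_1, n_2\}$ to $\{n_2, n_3\}$ at $v_0$ when $k \geq 3$. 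Throughout I use that an automorphism of $T$ is uniquely determined by the image of one vertex together with its local permutations at all vertices.

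First I would construct the translation. Let $g$ be the automorphism determined by $g(v_0) = v_2$, by $\sigma(g, v_0) = \sigma(g, v_{-1}) = \sigma_\ast$, and by $\sigma(g, v) = \id$ at all remaining vertices, where $\sigma_\ast \in F'$ satisfies $\sigma_\ast(n_2) = n_2$ and $\sigma_\ast(n_1) = n_3$; such a $\sigma_\ast$ exists precisely because $F'$ is $2$-transitive (equivalently, by Lemma \ref{stabiltrans}, because the stabilizer of $n_2$ in $F'$ is transitive on $\Omega \setminus \{n_2\}$), and for $k = 2$ one may take $\sigma_\ast = \id$, so that $g \in U(\id)$. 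Tracing the images of the two $L$-edges at each vertex shows that $g(v_i) = v_{i+2}$ for every $i \in \Z$, so $g$ translates $L$ with displacement length $2$; all its local permutations lie in $F$ except the (at most two) at $v_0$ and $v_{-1}$, which lie in $F'$, hence $g \in G(F) \cap U(F') = G(F, F')$; the displacement length is even, so $g \in G^+$; and $g$ maps $L$ onto $L$, so $g \in H$.

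Then I would construct the rotation $r$ around $v_0$, i.e.\ the element inducing $v_i \mapsto v_{-i}$ on $L$, transporting colours with the cycle $(n_1\, n_2\, \dots\, n_k)$ of $\tau \in F$. Let $r$ be determined by $r(v_0) = v_0$, by $\sigma(r, v_i) = \tau$ for $i \leq -1$, $\sigma(r, v_i) = \tau^{-1}$ for $i \geq 1$, $\sigma(r, v_0) = \rho$, and $\sigma(r, v) = \id$ at the remaining vertices, where $\rho \in F'$ satisfies $\rho(n_1) = n_2$ and $\rho(n_2) = n_1$ (again provided by $2$-transitivity of $F'$; for $k = 2$ one may take $\rho = \tau$ and all local permutations equal to $\tau$, so that $r \in U(F)$). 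A vertex-by-vertex comparison of colours shows that for $i \leq -1$ the permutation $\tau$, which sends $n_1 \mapsto n_2$ and $n_2 \mapsto n_3$, carries the colours of the two $L$-edges at $v_i$ to those at $v_{-i}$ compatibly with the rule $v_j \mapsto v_{-j}$; that $\tau^{-1}$, which sends $n_2 \mapsto n_1$ and $n_3 \mapsto n_2$, does the same for $i \geq 1$; and that $\rho$ does so at $v_0$. Hence $r(v_i) = v_{-i}$ for all $i$ and $r$ preserves $L$; only $\sigma(r, v_0)$ may fail to lie in $F$, so $r \in G(F, F')$; the displacement length of $v_0$ under $r$ is $0$, so $r \in G^+$; thus $r \in H$.

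The only genuine obstacle is the bookkeeping: one must track, vertex by vertex along $L$, the colours of the two $L$-edges and check that the prescribed local permutation sends them to the colours of the two $L$-edges at the image vertex, which is exactly what guarantees that the automorphism determined by the data restricts to the intended isometry of $L$ and, in particular, stabilises $L$ set-wise. The hypothesis that $F'$ is $2$-transitive is used only here, to supply (when $k \geq 3$) the finitely many ``defect'' local permutations $\sigma_\ast$ at $v_0, v_{-1}$ and $\rho$ at $v_0$ that $F$ itself need not contain; when $k = 2$ the element $\tau \in F$ already realises the transposition $(n_1\, n_2)$, so $F'$ is not needed there.
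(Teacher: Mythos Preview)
Your approach is essentially the paper's own: prescribe the local permutations along $L$ so that the induced isometry of $L$ is the desired translation or rotation, using $2$-transitivity of $F'$ only at the ``defect'' vertices, and then extend to $T$. Your choices along $L$ are correct and in fact more precise than the paper's somewhat ambiguous ``$\sigma(r,v_{\pm i})=\tau^{\pm i}$'' (the values $\tau$ for $i\le -1$ and $\tau^{-1}$ for $i\ge 1$ are exactly what one needs).

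There is, however, a genuine gap in the extension step. You assert that an automorphism is determined by the image of one vertex together with its local permutations at all vertices, and then simply set $\sigma(g,v)=\id$ (resp.\ $\sigma(r,v)=\id$) at every vertex off $L$. Uniqueness is fine, but existence is not: for any edge $\{u,v\}$ of colour $m$ one must have $\sigma(g,u)(m)=\sigma(g,v)(m)$, since both equal $c(g(\{u,v\}))$. Take a vertex $w\notin L$ adjacent to $v_0$ via an edge of colour $m\in\Omega\setminus\{n_1,n_2\}$; your prescription forces $\sigma_\ast(m)=m$, so $\sigma_\ast$ would have to fix every colour except $n_1,n_2$ while sending $n_1\mapsto n_3$, which is not a permutation. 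The same obstruction arises for $r$: at any $v_i$ with $i\neq 0$ the non-$L$ colours include elements of the $\tau$-cycle (e.g.\ $n_1$ at $v_1$), and $\tau^{\pm 1}$ does not fix them, so $\id$ at the neighbouring off-$L$ vertex is incompatible.

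The fix is exactly what the paper does: do not insist on $\id$ off $L$, but extend outward one layer at a time using the transitivity of $F$ (which holds here since $F'$ is transitive and preserves the $F$-orbits). At a vertex $w$ at distance one from $L$, only the image of the single edge back to $L$ is already forced, so transitivity of $F$ provides a permutation in $F$ realising that constraint; iterate. With this correction your argument goes through and yields $g,r\in G(F,F')\cap G^{+}$ stabilising $L$, as claimed.
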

 \begin{proof}[Proof of the claim]
  The translation $t$ with displacement length equal to 2 can be achieved in the following way: let us set $t(v_i)=v_{i+2}$ and $\sigma(t,v)=\id$ for every vertex $v\in L$ except for $v_0$ and $v_{-1}$. The local action of $t$ around the vertices $v_{-1}$ and $v_0$ can be prescribed by an element of $F'$ such that $t(v_{-1})=v_1$ and such that $L$ is preserved by using the fact that $F'$ is 2-transitive. Hence, we can extend $t$ to an automorphism of $T$ (which by construction preserves $L$) thanks to the transitivity of \(F\): in fact, given a vertex $w$ at distance 1 from $L$, if we look at the local action around $w$, we have that only the image of the edge which connects $w$ with $L$ is already prescribed. The transitivity of $F$ grants that there exists a permutation in $F$ which sends $w$ to its prescribed image. We can iterate this procedure to the whole tree.
  
  The rotation $r$ of \(L\) around the vertex $v_0$ sends the vertices \(v_i\) to \(v_{-i}\) and fixes $v_0$. We prescribe that $\sigma(r,v_0)\in F'$ and $r$ exchanges the edges of $E(v_0)$ labeled with $n_1$ and $n_2$ (this can be achieved again using that $F'$ is 2-transitive). For $i\neq0$, we can prescribe that $\sigma(r, v_{\pm i})=\tau^{\pm i}$, which is an element of $F$. Once again, we can extend this element to an automorphism of the whole tree by transitivity of \(F\).  
 \end{proof}

By combining $t$ and $r$, it is straightforward to check that $H$ acts transitively on the geometric edges of $L$. Hence, we can apply Theorem \ref{bcsegmfinleng} to deduce that $G(F, F')^+$ is boundedly acyclic, and this concludes the proof.
\end{proof}

Combining this result with Remark \ref{notamen} and Lemma \ref{directprod}, we get that the groups $G(F, F')$ with $F$ such that the stabilizers of two points are trivial and $F'$ 2-transitive are non-amenable boundedly acyclic groups with the additonal property that every finitely generated subgroup which is a direct product has finite cardinality. 

\begin{proof}[Proof of Theorem \ref{principale}, item (2)]
Let $d\in \N_{\geq3}$. Then $G=G(F, F')$ acts on $T=\mathcal{T}_d$ transitively hence minimally.\\
First, we prove that $G(F, F')$ does not fix a point in $\partial T$, regardless of whether $F'$ is 2-transitive or not. We notice that it suffices to prove that the Burger-Mozes group $U(\id)$ does not fix a point in the boundary of $T$ as it is always a subgroup of \(G(F,F')\). 
Let us suppose by contradiction that $U(\id)$ fixes a point in $\partial T$. Then, there exists a geodesic half-ray $\gamma$ such that for each $g \in U(\id)$ we have that $g \cdot \gamma$ is at bounded distance from $\gamma$ (i.e., it is definitely contained in the support of $\gamma$). Then, there exist two vertices $v_1, v_2 \in \gamma$ such that the coloring of the edges of $\gamma$ around $v_1$ and $v_2$ is the same. Since the degree $d$ of $T$ is at least 3, there is at least a vertex $w$ which is adjacent to $v_2$ but does not lie in $\gamma$. 
The action of $U(\id)$ on $T$ is vertex-transitive, so there is an element $g \in U(\id)$ such that $g \cdot v_1=w$. We claim that the geodesic half-ray $g \cdot \gamma$ is not in the same equivalence class of $\gamma$. If it was the case there should exist $t_0 \in \R$ such that $g \cdot \gamma(t)\in \gamma$ for each $t\geq t_0$. Let $e$ be the edge between $w$ and $v_2$. Since $e$ has a different color than the edges around $v_1$ which lie in $\gamma$ (let them be $e_1, e_2$), it follows that an element of $U(\id)$ cannot send $e_i$ to $e$ for $i=1,2$. The conclusion follows from the fact that $T$ is a tree, so it has no cycles (see also Figure \ref{figura2}).

\begin{figure}
\[\begin{tikzcd}[column sep=small,row sep=tiny]
	&&&&&&&& {g(v_1)} \\
	{} && \bullet && {} && {} && w \\
	{} & {} && {} & {} && {} & {} &&& {} \\
	{} & {} && {} & {} & {} & {} && {} & {} & {} \\
	\bullet && {v_1} & {} & \bullet && \bullet && {v_2} && \bullet & {} & {} & {} & {} \\
	&&&&&& {}
	\arrow[shorten >=14pt, dashed, no head, from=2-3, to=2-1]
	\arrow[shorten <=14pt, dashed, no head, from=2-5, to=2-3]
	\arrow[Rightarrow, no head, from=2-9, to=1-9]
	\arrow[shorten >=15pt, dashed, no head, from=2-9, to=2-7]
	\arrow["{g\cdot \gamma}", curve={height=-18pt}, dashed, no head, from=2-9, to=5-14]
	\arrow[dashed, no head, from=5-1, to=3-1]
	\arrow["1", no head, from=5-1, to=5-3]
	\arrow["3", no head, from=5-3, to=2-3]
	\arrow["2", no head, from=5-3, to=5-5]
	\arrow[dashed, no head, from=5-5, to=3-5]
	\arrow[dashed, no head, from=5-7, to=3-7]
	\arrow[dashed, no head, from=5-7, to=5-5]
	\arrow["1", no head, from=5-7, to=5-9]
	\arrow["3", no head, from=5-9, to=2-9]
	\arrow["2", no head, from=5-9, to=5-11]
	\arrow[dashed, no head, from=5-11, to=3-11]
	\arrow[""{name=0, anchor=center, inner sep=0}, shorten >=11pt, dashed, no head, from=5-11, to=5-15]
	\arrow["\underbrace{\hspace{9cm}}_{\gamma}"', shift right=2, curve={height=2pt}, draw=none, from=5-1, to=0]
\end{tikzcd}\]
\caption{Sketch of the argument in the proof of Theorem \ref{principale}, item (2), with $d=3$. The half-ray $\gamma$ is the horizontal half-line which proceeds to the right of the picture.}
    \label{figura2}
\end{figure}
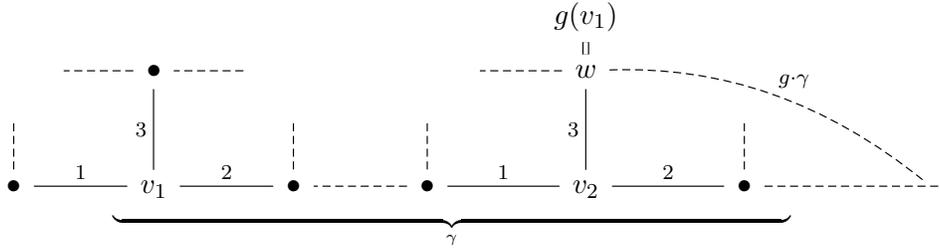

Hence \(G\) does not satisfy statement \((a)\) of Theorem \ref{thmIozzi}.
In order to conclude, it suffices to prove that if $F'$ is not 2-transitive then $G$ does not satisfy statement $(b)$ of Theorem \ref{thmIozzi} and hence, it has infinite dimensional second bounded cohomology.

Let us denote by $\mathcal{E}^n_v$ the set of geodesic segments of length $n$ starting at the vertex $v$. We distinguish two cases: if $F$ is not transitive, then neither is $F'$ (since it has to preserve the orbits of $F$). This implies that, for each vertex $v \in T$, $G$ does not act transitively on $\mathcal{E}^1_v$, i.e., the geodesic segments of length 1 (i.e., the edges) having $v$ as one of the endpoints.
If $F$ is transitive, then $F'$ is transitive too, but not 2-transitive by assumption. By Lemma \ref{stabiltrans}, this implies that, given a vertex $v$ and an edge $e\in \mathcal{E}^1_v$, the stabilizer of $e$ does not act transitively on $\mathcal{E}^1_v \setminus \{e\}$. In other words, there exist two edges $e_1, e_2 \in \mathcal{E}^1_v \setminus \{e\}$ with $e_1\neq e_2$ such that no element in the stabilizer of $e$ maps $e_1$ to $e_2$. 
Let $x$ be the other endpoint of $e$, and let us denote by $G_x$ the stabilizer of $x$. We consider the action of $G_x$ on $\mathcal{E}^2_x$, i.e., the geodesic segments of length 2 having $x$ as one of the endpoints. Let us take the geodesic segments $\gamma_1$ and $\gamma_2$ of length 2 such that $\gamma_i$ is obtained by the concatenation $e_i*e$. By definition, $\gamma_i \in \mathcal{E}^2_x$ for each $i=1,2$. Now, if $G$ acted transitively on $\mathcal{E}^2_x$ there would exist $g \in G$ such that $g \cdot \gamma_1=\gamma_2$. 
This implies that there should exist $\sigma$ in the stabilizer of $e$ such that $\sigma(e_1)=e_2$, but this contradicts the hypotheses made on \(e_1\) and \(e_2\). So $G$ does not act transitively on $\mathcal{E}^2_x$. Hence \(G\) does not satisfy statement \((b)\) of Theorem \ref{thmIozzi}, from which we get the conclusion.
\end{proof}

\bibliography{Bibliografia}

\begin{thebibliography}{CFFLM24b}

\bibitem[AB08]{AB08}
P.~Abramenko and K.~Brown.
\newblock {\em Buildings: {T}heory and {A}pplications}.
\newblock Springer New York, NY, 2008.

\bibitem[AB22]{AB22}
S.~Amontova and M.~Bucher.
\newblock Trivial cup products in bounded cohomology of the free groups via aligned chains.
\newblock {\em Forum Math.}, 34(4):933--943, 2022.

\bibitem[BdlH00]{BD00}
B.~Bekka and P.~de~la Harpe.
\newblock Groups with simple reduced {$C^*$} -algebra.
\newblock {\em Expo. Math.}, 18(3):215--230, 2000.

\bibitem[BF02]{BF02}
M.~Bestvina and K.~Fujiwara.
\newblock {Bounded cohomology of subgroups of mapping class groups}.
\newblock {\em Geom. {T}op.}, 6(1):69 -- 89, 2002.

\bibitem[BM00]{BM00}
M.~Burger and S.~Mozes.
\newblock Groups acting on trees : from local to global structure.
\newblock {\em Publ. Math. de l'IHÉS}, 92:113--150, 2000.

\bibitem[BM17a]{BM17}
M.~Bucher and N.~Monod.
\newblock The bounded cohomology of {$\text{SL}_{\textbf{2}}$} over local fields and {$\boldsymbol{S}$}-integers.
\newblock {\em Int. Math. Res. Not.}, 2019(6):1601--1611, 2017.

\bibitem[BM17b]{BM17bis}
M.~Bucher and N.~Monod.
\newblock The cup product of {B}rooks quasimorphisms.
\newblock {\em Forum Math.}, 30(5), 2017.

\bibitem[But19]{button2019groupsactingfaithfullytrees}
J.~Button.
\newblock Groups acting faithfully on trees and properly on products of trees, 2019.
\newblock ar{X}iv/1910.04614.

\bibitem[Cal09]{calegari2009scl}
D.~Calegari.
\newblock {\em scl}, volume~20 of {\em MSJ Mem.}
\newblock Mathematical Society of Japan, Tokyo, 2009.

\bibitem[CF10]{CF10}
PE. Caprace and K.~Fujiwara.
\newblock Rank-one isometries of buildings and quasi-morphisms of {K}ac–{M}oody groups.
\newblock {\em Geom. Funct. Anal.}, 19:1296–1319, 10.

\bibitem[CFFLM24a]{CFFLM24}
C.~Campagnolo, F.~Fournier-Facio, Y.~Lodha, and M.~Moraschini.
\newblock An algebraic criterion for the vanishing of bounded cohomology, 2024.
\newblock ar{X}iv/2311.16259.

\bibitem[CFFLM24b]{CFFLM24bis}
C.~Campagnolo, F.~Fournier-Facio, Y.~Lodha, and M.~Moraschini.
\newblock Displacement techniques in bounded cohomology.
\newblock to appear in manuscripta math., 2024.
\newblock ar{X}iv/2401.08857.

\bibitem[EF97]{EF97}
D.~B.~A. Epstein and K.~Fujiwara.
\newblock The second bounded cohomology of word-hyperbolic groups.
\newblock {\em Topology}, 36(6):1275--1289, 1997.

\bibitem[FF20]{FF20}
F.~Fournier-Facio.
\newblock Infinite sums of {B}rooks quasimorphisms and cup products in bounded cohomology, 2020.
\newblock ar{X}iv, 2002.10323.

\bibitem[FFLM23]{FLM22}
F.~Fournier-Facio, C.~L{\"o}h, and M.~Moraschini.
\newblock Bounded cohomology and binate groups.
\newblock {\em J. Aus. Math. Soc.}, 115:204--239, 2023.

\bibitem[FFMNK24]{FFMNK24}
F.~Fournier-Facio, N.~Monod, S.~Nariman, and A.~Kupers.
\newblock The bounded cohomology of transformation groups of {E}uclidean spaces and discs, 2024.
\newblock ar{X}iv/2405.20395.

\bibitem[Fri17]{frigerio:book}
R.~Frigerio.
\newblock {\em Bounded cohomology of discrete groups}, volume 227 of {\em Mathematical Surveys and Monographs}.
\newblock American Mathematical Society, Providence, RI, 2017.

\bibitem[Fuj98]{Fu98}
K.~Fujiwara.
\newblock The second bounded cohomology of a group acting on a {G}romov-hyperbolic space.
\newblock {\em Proc. London Math. Soc.}, 76(1):70–94, 1998.

\bibitem[Fuj00]{Fu00}
K.~Fujiwara.
\newblock The second bounded cohomology of an amalgamated free product of groups.
\newblock {\em Trans. Am. Math. Soc.}, 352(3):1113--1129, 2000.

\bibitem[GGT18]{GGT18}
A.~Garrido, Y.~Glasner, and S.~Tornier.
\newblock {\em Automorphism groups of trees: generalities and prescribed local actions}, page 92–116.
\newblock London Math. Soc. Lecture Note Ser. Cambridge University Press, 2018.

\bibitem[Hof24]{Franziska}
F.~Hofmann.
\newblock Quasimorphisms and bounded cohomology.
\newblock Master Thesis, October 2024.

\bibitem[IPS20]{IPS20}
A.~Iozzi, C.~Pagliantini, and A.~Sisto.
\newblock Characterising actions on trees yielding non-trivial quasimorphisms.
\newblock {\em Ann. Math. Qu{\'e}.}, 45:185--202, 2020.

\bibitem[Joh72]{Joh72}
B.E. Johnson.
\newblock Cohomology in {B}anach algebras.
\newblock {\em Mem. of the Am. Math. Soc.}, 1972.

\bibitem[LB15]{LB15}
A.~Le~Boudec.
\newblock Groups acting on trees with almost prescribed local action.
\newblock {\em Comment. Math. Helv.}, 91:253--293, 2015.

\bibitem[LB16]{lebouinventiones}
A.~Le~Boudec.
\newblock {$C^*$}-simplicity and the amenable radical.
\newblock {\em Invent. Math.}, 209:159--174, 2016.

\bibitem[Mar22]{Mar22}
D.~Marasco.
\newblock Trivial {M}assey product in bounded cohomology, 2022.
\newblock ar{X}iv/2209.00560.

\bibitem[MM85]{MM85}
S.~Matsumoto and S.~Morita.
\newblock Bounded cohomology of certain groups of homeomorphisms.
\newblock {\em Proc. Am. Math. Soc.}, 94(3):539--544, 1985.

\bibitem[Mon01]{Mon01}
N.~Monod.
\newblock {\em Continuous Bounded Cohomology of Locally Compact Groups}.
\newblock Springer Berlin Heidelberg, 2001.

\bibitem[Mon22]{Monod2022}
N.~Monod.
\newblock Lamplighters and the bounded cohomology of {T}hompson’s group.
\newblock {\em Geom. Funct. Anal.}, 32(3):662–675, 2022.

\bibitem[Mon24]{Mon24}
N.~Monod.
\newblock Flatmates and the bounded cohomology of algebraic groups, 2024.
\newblock ar{X}iv/2407.01709.

\bibitem[Moo18]{amrelation}
J.~T. Moore.
\newblock A brief introduction to amenable equivalence relations.
\newblock {\em Trends in Set Theory}, 2018.

\bibitem[Rob96]{Robinson1996}
D.~J.~S. Robinson.
\newblock {\em A Course in the Theory of Groups}.
\newblock Springer New York, 1996.

\bibitem[Tit74]{Tits74}
J.~Tits.
\newblock {\em Buildings of {S}pherical {T}ype and {F}inite {BN}-{P}airs}.
\newblock Springer Berlin, Heidelberg, 1974.

\end{thebibliography}
\bibliographystyle{alpha}

\end{document}